\renewcommand{\a}{{\bf a}}
\renewcommand{\b}{{\bf b}}
\newcommand{\ee}{{\rm e}}
\newcommand{\f}{{\bf f}}
\newcommand{\n}{{\bf n}}
\renewcommand{\u}{{\bf u}}
\renewcommand{\v}{{\bf v}}
\newcommand{\w}{{\bf w}}
\newcommand{\x}{{\bf x}}
\newcommand{\y}{{\bf y}}
\newcommand{\z}{{\bf z}}
\newcommand{\zero}{{\bf 0}}
\newcommand{\bb}{{\bf B}}
\renewcommand{\ee}{{\bf E}}
\newcommand{\hh}{{\bf H}}
\newcommand{\ii}{{\bf I}}
\renewcommand{\ll}{{\bf L}}
\newcommand{\mm}{{\bf M}}
\newcommand{\pp}{{\bf P}}
\newcommand{\rr}{{\bf R}}
\newcommand{\ww}{{\bf W}}
\newcommand{\ccc}{\mathbb{C}}
\newcommand{\hhh}{\mathbb{H}}
\renewcommand{\lll}{\mathbb{L}}
\newcommand{\nnn}{\mathbb{N}}
\newcommand{\rrr}{\mathbb{R}}
\newcommand{\oooo}{\mathcal{O}}
\newcommand{\rrrr}{\mathcal{R}}
\newcommand{\tttt}{\mathcal{T}}
\newcommand{\btau}{{\bm\tau}}
\newcommand{\bsigma}{{\bm\sigma}}
\newcommand{\bepsilon}{{\bm\epsilon}}
\renewcommand{\div}{{\rm div}}
\newcommand{\bdiv}{{\bf div}}
\newcommand{\curl}{{\rm curl}}
\newcommand{\tr}{{\rm tr}}
\newcommand{\transpose}{\texttt{t}}
\newtheorem{theorem}{Theorem}
\newtheorem{definition}{Definition}
\newtheorem{lemma}{Lemma}
\title{Steklov eigenvalues for the Lam\'e operator in linear elasticity\thanks{This work was partially supported by CONICYT-Chile, through Becas 
Chile, and NSERC through the Discovery program of Canada.}}
\author{Sebasti\'an Dom\'inguez\thanks{Department of Mathematics, Simon Fraser University, Burnaby, BC, 
Canada}{\,\,\,\thanks{Corresponding author: \href{mailto:domingue@sfu.ca}{domingue@sfu.ca}}}}
\begin{document}

\maketitle

\begin{abstract}
In this paper we study Steklov eigenvalues for the Lam\'e operator which arise in the theory of linear elasticity. In this eigenproblem the spectral parameter appears in a Robin boundary condition, linking the traction and the displacement. To establish the existence of a countable spectrum for this problem, we present an extension of Korn's inequality. We also show that a proposed conforming Galerkin scheme provides convergent approximations to the true eigenvalues. A standard finite element method is used to conduct numerical experiments on 2D and 3D domains to support our theoretical findings.
\end{abstract}


{\bf Keywords}: Steklov eigenvalues, Lam\'e operator, Korn's inequality, conforming Galerkin method
\vspace{.25cm}

{\bf AMS subject classifications}: 74B05, 74B20, 74M15, 65N25, 65N30

\section{Introduction}\label{section:intro}
The Steklov problem for the Laplace operator is well-studied in the mathematical community. This eigenproblem was first introduced by V. Steklov in \cite{ref:steklov1902}, and has become a rich source of interesting research. These eigenfrequencies naturally arise in the study of the sloshing phenomenon in fluid mechanics (see, e.g. \cite{ref:mayer2012}). The Steklov spectrum coincides with that of the Dirichlet-to-Neumann map for the Laplacian, see e.g. \cite{ref:girouard2017}. 

The study of the Dirichlet-to-Neumann map for linear elasticity is important in elastostatic problems. In this paper we are interested in the study of Steklov eigenvalues for the Lam\'e operator. More precisely, let us assume that an isotropic and linearly elastic material occupies the region $\Omega$ in $\rrr^d$, $d\geq2$.  We seek non-zero displacements $\u$ of $\Omega$ and frequencies $w\in\ccc$ satisfying the following eigenproblem:
\begin{align}
-\bdiv\,\bsigma(\u) = \zero\,\,\mbox{in $\Omega$},\,\,\bsigma(\u)\n = 
w\,p\,\u\,\,\mbox{on $\partial\Omega$},\label{eq:steklov-lame}
\end{align}
where $\n$ is the outer unit normal vector on $\partial\Omega$, and $\bsigma$ is the Cauchy tensor, defined as
\begin{align}
    \bsigma(\u) := 2\mu\bepsilon(\u) + \lambda\tr(\bepsilon(\u))\ii,\,\,\bepsilon(\u) := \frac{1}{2}(\nabla\u+\nabla\u^\transpose).
\end{align}
The parameters $\lambda\in\rrr$ and $\mu>0$ are the so-called Lam\'e parameters, assumed to satisfy the condition
\begin{align}
\lambda+\left(\frac{2}{d}\right)\mu >0,\label{eq:intro-lame-parameters}
\end{align}
while the parameter $p \in L^\infty(\partial\Omega)$ is assumed to be strictly positive on $\partial\Omega$. Eigenpairs solving \eqref{eq:steklov-lame} are called {\it Steklov-Lam\'e eigenpairs}. 

Historically researchers have studied the case where the displacement is set to zero on a subset of the boundary and the Robin boundary condition in \eqref{eq:steklov-lame} on the remainder. For the existence of the countable spectrum of these problems the standard Korn's inequality suffices \cite{ref:cao2013,ref:gomez2018,ref:ionescu1996,ref:ionescu2005}. The contribution of this paper is to the situations in which there is no Dirichlet part on the boundary. In the physics literature, the parameter $p$ can be thought as the density of the material in $\Omega$ which is concentrated on $\partial\Omega$. For instance, as noted by \cite{ref:hinton1990}, the author in \cite{ref:atkinson1964} discusses 1D boundary value problems where the spectral parameter appears on the boundary conditions. One of the applications the author describes is a long 1D linearly elastic string, with a point mass attached to one of the end points of the string \cite[p. 22]{ref:atkinson1964}.

The first goal of this manuscript is to establish the existence of a countable spectrum of the Steklov-Lam\'e eigenproblem in \eqref{eq:steklov-lame}. To this end, a Korn's-type inequality \cite{ref:korn1906,ref:korn1909} is proved to achieve the existence of a point spectrum. The Sobolev embedding theorem plays an important role in the proof of Korn's inequality. The version of Korn's inequality that we show represents a natural extension to the inequality presented in \cite[Theorem 3.1]{ref:damlamian2018}.
In addition, we are able to show, based on the work in \cite{ref:duran2004,ref:duran2006}, that this inequality is valid over Jones domains (sometimes called $(\epsilon,\delta)$-domains) and John domains. Jones domains were first introduced in \cite{ref:jones1981}, whereas John domains were introduced in \cite{ref:john1961}. These classes of domains constitute a very large class of domains which contain, for example, Lipschitz as well as star-shape domains.

The next goal of this manuscript is to derive suitable numerical schemes to approximate the Steklov-Lam\'e eigenpairs. Following the theory developed in \cite{ref:babuskaosborn1991} for self-adjoint, compact, linear and bounded operators, we are able to show that, under some assumptions, any conforming Galerkin scheme provides a stable approximation to the true eigenpairs. A number of numerical examples are provided with the use of Lagrange elements in 2D and 3D.

The rest of the paper is organized as follows. A proof of the necessary version of Korn's inequality is presented in \ref{section:korns}. Then in \ref{section:steklovlame} we study the existence of a countable spectrum of the Steklov-Lam\'e eigenproblem. 
In \ref{section:discrete} we propose a discrete scheme to approximate these eigenpairs and provide a spectral characterization of the corresponding discrete solution operator. Numerical examples are also presented in \ref{section:discrete}.

\section{A variant of Korn's inequality}\label{section:korns}
We first fix some notation. Vector fields will be denoted with bold symbols whereas tensor fields are denoted with bold Greek letters.
%
The notation $\a\cdot\b$ is the standard dot product with induced norm $\|\cdot\|$. For tensors $\bsigma,\,\btau\in\rrr^{d\times d}$, the double dot product notation $\bsigma:\btau := \tr(\btau^\transpose\bsigma)$ is the usual inner product for tensors, where $\tr(\cdot)$ denotes the trace of a tensor (sum of the main diagonal). This inner product induces the Frobenius norm for tensors, also denoted as $\|\cdot\|$. 

Given a real vector space $H$ of scalar 
fields, we denote by $\hh$ the vector valued functions such that each scalar component belongs to $H$. Further, $\hhh$ is
utilized to denote tensor fields such that every entry belongs to $H$. For an open domain $U$ of $\rrr^d$,
$d\geq 2$, the space $W^{s,p}(U)$ denotes the usual Sobolev space of scalar fields, for $s\in\rrr$ and $1<p<\infty$, with norm $\|\cdot\|_{s,p, U }$. For vector fields, we use the notation $\ww^{s,p}( U )$ with the corresponding norm simply denoted by $\|\cdot\|_{s,p}$. In particular, the Hilbert space $H^s(U)$ is the usual Sobolev space $W^{s,2}( U )$ with norm $\|\cdot\|_{s, U } := \|\cdot\|_{s,2, U }$.
The inner product in $H^s( U )$ is $(\cdot,\cdot)_{s, U }$, whereas $[\cdot,\cdot]_{H^s( U )}$ is the duality pairing between the dual space $\big(H^s( U )\big)^*$ and $H^s( U )$. For vector fields whose entries belong to $H^s(U)$, we use $\hh^s(U)$ with corresponding inner product and norm also denoted by $(\cdot,\cdot)_{s,U}$ and $\|\cdot\|_{s,U}$ respectively. 
We use the convention $H^0( U ) = L^2( U )$ and $\hh^0( U ) = \ll^2( U )$. Sobolev spaces on the boundary $\partial U$ 
are defined accordingly (see, e.g. \cite{ref:mclean2000}), and will be denoted as $W^{s,p}(\partial U)$, $s\in\rrr$, $1<p<\infty$. 



\subsection{An overview}\label{section:overview}
Korn's inequality was first introduced in a pioneering work by Arthur Korn in 1906 \cite{ref:korn1906}. For an open and bounded domain $\Omega$ of $\rrr^d$, $d\geq 2$, A. Korn showed the existence of a positive constant $C>0$ such that
\begin{align}
    \|\nabla\u\|_{0,\Omega} \leq C\|\bepsilon(\u)\|_{0,\Omega},\label{eq:introfirstkorn}
\end{align}
for any vector field $\u$ in $\hh^1(\Omega)$ subject to a zero Dirichlet trace along the boundary of $\Omega$. 
Here $\bepsilon(\u)$ is the strain tensor or the symmetric part of the tensor $\nabla\u$. The inequality in \eqref{eq:introfirstkorn} is usually referred to as {\it Korn's first inequality}. In a second publication \cite{ref:korn1909}, A. Korn proved that the same inequality as in \eqref{eq:introfirstkorn} holds for vector fields $\u$ in $\hh^1(\Omega)$ satisfying the free-rotation condition
\begin{align*}
    \int_\Omega \curl\,\u = \zero,
\end{align*}
with $\curl(\cdot)$ denoting the usual rotation operator. This version is known as {\it Korn's second inequality}. 

Another way to state Korn's inequality is as follows:
\begin{align}
    \|\u\|_{1,\Omega} \leq C\big(\|\bepsilon(\u)\|_{0,\Omega}+\|\u\|_{0,\Omega}\big),\label{eq:introkorn}
\end{align}
for any vector field $\u$ in $\hh^1(\Omega)$. 
This inequality is often simply called {\it Korn's inequality}. One can show that \eqref{eq:introkorn} implies Korn's first inequality in \eqref{eq:introfirstkorn} for functions for which Poincar\'e's inequality holds. Additionally, \eqref{eq:introkorn} can be used to show that Korn's second inequality holds whenever the inclusion $\hh^1(\Omega)\hookrightarrow \ll^2(\Omega)$ is compact.

Note that no extra conditions on the functions defined on $\Omega$ are needed to establish the inequality in \eqref{eq:introkorn}. However, that is not the case for the Korn's first and second inequalities. For instance, \eqref{eq:introfirstkorn} is violated for certain vector fields in $\hh^1(\Omega)$: the so-called {\it rigid motions}, which are vector fields with strain-free energy. The space of rigid motions is defined as
\begin{align}
    \rr\mm(\Omega) :=\big\{\v\in\hh^1(\Omega) :\,\v(\x) = \a + \bb\x,\,\,\a\in\rrr^d,\,\,\bb\in\rrr^{d\times d},\,\,\bb^\transpose=-\bb,\,\,\x\in\Omega\big\}.\label{eq:rididmotionsdef}
\end{align}
Indeed, one can see that $\bepsilon(\cdot)$ defines a linear and bounded operator in $\hh^1(\Omega)$ whose kernel exactly coincides with the space of all rigid motions. From the definition of the Cauchy tensor for linear elastic materials one can see that the kernel of the strain tensor and the Cauchy tensor coincide.

A first proof of Korn's inequality in \eqref{eq:introkorn} for Lipschitz domains was given by J.A. Nitsche \cite{ref:nitsche1981}; this proof is based on the existence of an extension operator in Sobolev spaces. A different proof provided in \cite{ref:kikuchi1988,ref:ting1972} uses the Calder\'on-Zygmund theory of singular integral operators. 




An interesting version of Korn's inequality involving a semi-norm of vector fields in $\hh^1(\Omega)$ satisfying certain conditions is discussed in 
\cite{ref:brenner2003}. In this paper the author considers a bounded semi-norm $\Phi:\hh^1(\Omega)\to\rrr$ so that only pure translations live in its kernel.
In this case a new version of Korn's inequality holds:
\begin{align}
    \|\u\|_{1,\Omega} \leq C_\Phi\big(\|\bepsilon(\u)\|_{0,\Omega}+\Phi(\u)\big)\quad\forall\,\u\in\hh^1(\Omega),\label{eq:introkornseminorm}
\end{align}
where the constant $C_\Phi>0$ is such that $\Phi(\u) \leq C_\Phi\|\u\|_{1,\Omega}$ for any $\u\in\hh^1(\Omega)$.

A similar version of the inequality in \eqref{eq:introkornseminorm} was proven in a recent article \cite{ref:damlamian2018}. 
Specifically, Korn's inequality in \cite[Theorem 3.1]{ref:damlamian2018} reads as follows: 
\begin{quote}
    {\it Let $O$ be a Korn-Wirtinger domain. If $F:H^1(O)\to\rrr$ is a Lipschitz map whose restriction to the subset $\rrrr$ of rigid motions is bounded below by a norm on $\rrrr$. Then there exists a constant $C$ such that
    \begin{align}
     \forall \,u\in H^1(O),\quad \|u\|_{H^1(O)}\leq C\big(F(u) + \|e(u)\|_{L^2(O)}\big).\label{eq:introkornlmap}
    \end{align}
    }
\end{quote}
In the same manuscript, a Korn-Wirtinger domain is defined as follows \cite[Definition  2.2]{ref:damlamian2018}:

\begin{quote}
    {\it A bounded connected open domain $O$ is a Korn-Wirtinger domain if there is a constant $C>0$, depending only on $O$, such that for every $v$ in $H^1(O)$, there is a rigid motion $r(v)$ in $\rrrr$ with
\begin{align*}
    \|v-r(v)\|_{H^1(\Omega)} \leq C \|e(v)\|_{L^2(\Omega)}.
\end{align*}
    }
\end{quote}
Note that any semi-norm is a Lipschitz continuous map. However, \eqref{eq:introkornlmap} is not an immediate generalization of \eqref{eq:introkornseminorm} since $F$ fails to  have only non-zero translations in its kernel. We additionally note that important information about the geometry of the domain
might be lost in the definition of a Korn-Wirtinger domain. For instance, the regularity of the boundary of $O$ does not seem to be readily obtained by simply studying the definition above. 


An extension of the inequality in \eqref{eq:introkorn} to $(\epsilon,\delta)$-domains (also called Jones domains) was given in \cite{ref:duran2004}. 
The notion of $(\epsilon,\delta)$-domains was first introduced by P.W. Jones in 1981 \cite{ref:jones1981}. 
Concretely, a Jones domain is defined as follows. 
\begin{definition}
Let $\Omega\subseteq\rrr^d$ be an open and bounded domain and let $\epsilon,\delta>0$ be given. We say that $\Omega$ is a {\it Jones domain} if for any $\x,\,\y\in\Omega$ such that $\|\x-\y\|<\delta$, there is a rectifiable curve $\gamma:[0,1]\to\Omega$ such that $\gamma(0) = \x$, $\gamma(1) = \y$, and
\begin{align*}
    \ell(\gamma) \leq \frac{\|\x-\y\|}{\epsilon},\quad \epsilon\frac{\|\gamma(t)-\y\|\|\gamma(t)-\y\|}{\|\x-\y\|} \leq \inf_{\w\in\partial\Omega}\|\gamma(t)-\w\|,\,\,\forall\,t\in[0,1],
\end{align*}
where $\ell(\gamma)$ denotes the Euclidean length of $\gamma$.
\end{definition}
 We note that Lipschitz domains and star-shaped domains form a subclass of Jones domains. It was shown in \cite{ref:duran2004} that Korn's inequality in \eqref{eq:introkorn} remains true on Jones domains.
It was additionally shown that this Korn's inequality remains true for vector fields $\u$ in $\ww^{1,p}(\Omega)$, for any $1<p<+\infty$,
%
where the constant $C>0$ depends on $\Omega$, the exponent $p$, $\epsilon$, $\delta$ and the dimension $d\geq 2$. 


Later, it was shown in \cite{ref:duran2006} that Korn's inequality in \eqref{eq:introkorn} holds for John domains and for vector fields in $\ww^{1,p}(\Omega)$, $1<p<+\infty$. Such domains are defined as follows.
\begin{definition}
Let $\Omega$ be an open and bounded domain in $\rrr^d$, $d\geq2$. 
Given $0<\alpha\leq \beta <\infty$, the domain $\Omega$ is a {\it John domain} if there is a point $\x_0\in\Omega$ such that for every point $\x\in\Omega$, $\x\neq\x_0$, there is a rectifiable curve $\gamma:[0,\|\x-\x_0\|]\to\Omega$ such that $\gamma(0) = \x$, $\gamma(\|\x-\x_0\|) = \x_0$ and
\begin{align*}
    \|\x-\x_0\|\leq \beta,\quad \frac{\alpha\cdot t}{\|\x-\x_0\|}\leq\inf_{\z\in\partial\Omega}\|\gamma(t)-\z\|,\,\, \forall\,t\in[0,\|\x-\x_0\|].
\end{align*}
The point $\x_0\in\Omega$ is called the {\it centre} of $\Omega$.
\end{definition}
John domains were first introduced by F. John in 1961 \cite{ref:john1961} and named after him by O. Martio and J. Sarvas in 1978 \cite{ref:martio1978}. The class of John domains consists of very general domains: Lipschitz domains and the Koch snowflake domains belong to this class.
We note that not every John domain is a Jones domain. In fact, the domain $\Omega$ defined as
\begin{align*}
    \Omega := B(\zero,1)\backslash \big\{\x\in\rrr^d:\,x_1 \geq 0,\, x_d = 0\big\}\subseteq \rrr^d,
\end{align*}
is a John domain but not a Jones domain.
Some characterizations and more examples of these domains can be found in
\cite{ref:lopez2018,ref:brewster2014,ref:duran2006,ref:martio1978}.

Finally, we comment that a version of Korn's inequality in \eqref{eq:introkorn} was recently generalized to Banach spaces such as Lebesgue, Lorentz, and Zygmund spaces defined on open sets of $\rrr^d$; see \cite{ref:cianchi2020}.

\subsection{Korn's inequality and continuous mappings}\label{subsection:extension}
Let $\Omega$ be an open and bounded domain $\Omega$ in $\rrr^d$, $d\geq 2$. Let us first recall the following result on compact embedding of Hilbert spaces. 
\begin{theorem}[{\cite[Theorem 3.27]{ref:mclean2000}}]\label{result:inclusion}
 If $U$ is an open and bounded domain and $-\infty < s < t <+\infty$, then $H^t(U)$ is compactly included in $H^s(U)$.
\end{theorem}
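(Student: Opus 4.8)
The plan is to reduce the statement to a compactness result on $\rrr^d$ through the Fourier transform, using the fact that, by definition, every field in $H^t(U)$ is the restriction to $U$ of a field in $H^t(\rrr^d)$. Recall the Plancherel-type characterisation $\|v\|_{s,\rrr^d}^2 = \int_{\rrr^d}(1+\|\xi\|^2)^s|\widehat v(\xi)|^2\,d\xi$, where $\widehat v$ denotes the Fourier transform of $v$. The gap $t-s>0$ is the source of compactness: it forces the high-frequency content of an $H^t$-bounded family to be uniformly negligible when measured in $H^s$.

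First I would establish the following lemma on $\rrr^d$: if $\{v_n\}$ is bounded in $H^t(\rrr^d)$ and every $v_n$ is supported in a fixed compact set $K$, then some subsequence converges in $H^s(\rrr^d)$. After passing to a weakly convergent subsequence $v_n\rightharpoonup v$ in $H^t(\rrr^d)$, I would split the $H^s$-norm of $v_n-v$ at a frequency threshold $R>0$. On the low-frequency region $\{\|\xi\|\le R\}$, the common compact support makes each $\widehat{v_n}$ an entire function with uniformly bounded values and gradients on compacta; together with the pointwise convergence $\widehat{v_n}(\xi)\to\widehat v(\xi)$ (which follows from weak convergence by testing against $\xi\mapsto e^{-i\,\x\cdot\xi}$ cut off to a neighbourhood of $K$), dominated convergence yields
\begin{align*}
\int_{\|\xi\|\le R}(1+\|\xi\|^2)^s\,|\widehat{v_n}(\xi)-\widehat v(\xi)|^2\,d\xi \longrightarrow 0.
\end{align*}
On the high-frequency region $\{\|\xi\|>R\}$ I would use $(1+\|\xi\|^2)^s\le (1+R^2)^{s-t}(1+\|\xi\|^2)^t$ to bound that part of the integral by $(1+R^2)^{s-t}\|v_n-v\|_{t,\rrr^d}^2$, which is at most a constant times $(1+R^2)^{s-t}$, uniformly in $n$. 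Choosing $R$ large first and then $n$ large gives convergence in $H^s(\rrr^d)$.

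Finally I would transfer the lemma to $U$. Given a bounded sequence $\{u_n\}$ in $H^t(U)$, the restriction-space definition provides extensions $U_n\in H^t(\rrr^d)$ with $\|U_n\|_{t,\rrr^d}\le 2\|u_n\|_{t,U}$. Since $U$ is bounded, I may fix a cutoff $\chi\in C_c^\infty(\rrr^d)$ equal to $1$ on a neighbourhood of $\overline U$; multiplication by $\chi$ is bounded on $H^t(\rrr^d)$ for every real $t$, so $\{\chi U_n\}$ is bounded in $H^t(\rrr^d)$ and supported in the fixed compact set $\mathrm{supp}\,\chi$, while $(\chi U_n)|_U = u_n$. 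Applying the lemma and then the bounded restriction map $H^s(\rrr^d)\to H^s(U)$ produces a subsequence of $\{u_n\}$ that converges in $H^s(U)$, which is precisely compactness of the inclusion.

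The main obstacle I anticipate is the low-frequency step: extracting genuine (not merely weak) convergence requires the uniform regularity of the transforms $\widehat{v_n}$ supplied by the common compact support, and it is exactly to guarantee such a common support — without assuming any boundary regularity on $U$ — that the cutoff trick in the last paragraph is essential. The high-frequency tail, by contrast, is controlled purely by the strict inequality $s<t$ and needs no geometric hypotheses on $U$.
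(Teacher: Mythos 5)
The paper never proves this statement---it is imported directly from McLean's book as Theorem 3.27---so there is no in-paper argument to compare against; your proof is essentially the classical Fourier-analytic argument behind the cited result. Your proposal is correct: the extension-plus-cutoff reduction to a fixed compact support, the pointwise convergence of $\widehat{v_n}$ from weak $H^t$-convergence tested against $\chi(\x)e^{-i\x\cdot\xi}$ (with the uniform bound on $\{\|\xi\|\le R\}$ supplied by the common support), the dominated-convergence step on the low frequencies, and the tail estimate $(1+\|\xi\|^2)^s\le(1+R^2)^{s-t}(1+\|\xi\|^2)^t$ driven by $s<t$ all hold, and the final restriction map $H^s(\rrr^d)\to H^s(U)$ has norm at most one by the infimum definition of the norm. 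You also correctly isolated the crux that makes the theorem true for arbitrary bounded open $U$: in this paper (and in McLean), $H^t(U)$ is the space of restrictions of $H^t(\rrr^d)$-distributions, and with an intrinsic definition the claim would fail for irregular domains, so flagging that definitional point was exactly right.
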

It immediately follows from this result that the inclusion $\hh^t(\Omega)\hookrightarrow\hh^s(\Omega)$ is compact for any $s<t$.
We provide a generalized Korn's inequality for vector fields in $\hh^1(\Omega)$. However, the technique of our proof can be easily extended to vector fields in $\ww^{1,p}(\Omega)$, for any $1<p<+\infty$.
The main theorem is as follows.
\begin{theorem}\label{result:kornsh1}
 Assume $\Omega$ is an open and bounded Jones domain or John domain of $\rrr^d$, $d\geq 2$. Let $F:\hh^1(\Omega)\to\rrr$ be a continuous mapping and define the set
  \begin{align*}
    N(F):=\Big\{\v\in\hh^1(\Omega):\,F(\v) = 0\Big\},
\end{align*} Assume further that the mapping $F$ satisfies one of the following conditions
\begin{align}
 N(F)\cap \rr\mm(\Omega) = \{\zero\}\quad\text{or}\quad N(F)\cap \rr\mm(\Omega) = \emptyset,\label{eq:zerocap}
\end{align}
where $\rr\mm(\Omega)$ is the space of rigid motions of $\Omega$ as defined in \eqref{eq:rididmotionsdef}. Then there exists a constant $C>0$, depending only on $\Omega$ and $F$ such that
 \begin{align}
 \|\u\|_{1,\Omega} \leq C\Big(\|\bepsilon(\u)\|_{0,\Omega} + |F(\u)|\Big),\quad\forall\, \u \in 
\hh^1(\Omega).\label{eq:extendedkorns}
\end{align}
\end{theorem}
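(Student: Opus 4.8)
The plan is to prove the inequality by contradiction using a compactness argument, which is the standard route for Korn-type inequalities of this form. Suppose \eqref{eq:extendedkorns} fails. Then for every $n\in\nnn$ there is a vector field $\u_n\in\hh^1(\Omega)$ with
\begin{align*}
\|\u_n\|_{1,\Omega} > n\Big(\|\bepsilon(\u_n)\|_{0,\Omega} + |F(\u_n)|\Big).
\end{align*}
After normalizing so that $\|\u_n\|_{1,\Omega}=1$ for all $n$, this forces $\|\bepsilon(\u_n)\|_{0,\Omega}\to 0$ and $|F(\u_n)|\to 0$ as $n\to\infty$. The strategy is to extract a strongly convergent subsequence whose limit is a nonzero rigid motion lying in $N(F)$, contradicting the hypothesis \eqref{eq:zerocap}.

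First I would invoke the standard Korn inequality \eqref{eq:introkorn}, which is valid on Jones domains and John domains by the results of \cite{ref:duran2004,ref:duran2006} quoted in the overview. Applied to the bounded sequence $(\u_n)$, it gives a uniform bound on $\|\u_n\|_{1,\Omega}$ in terms of $\|\bepsilon(\u_n)\|_{0,\Omega}$ and $\|\u_n\|_{0,\Omega}$; since the sequence is already normalized in $\hh^1(\Omega)$ and $\|\bepsilon(\u_n)\|_{0,\Omega}\to 0$, the essential consequence is that $(\u_n)$ is bounded in $\hh^1(\Omega)$ and its symmetric gradients vanish in the limit. By Theorem~\ref{result:inclusion} (applied with $t=1$, $s=0$) the inclusion $\hh^1(\Omega)\hookrightarrow\ll^2(\Omega)$ is compact, so after passing to a subsequence we may assume $\u_n\rightharpoonup\u$ weakly in $\hh^1(\Omega)$ and $\u_n\to\u$ strongly in $\ll^2(\Omega)$ for some $\u\in\hh^1(\Omega)$.

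Next I would upgrade this to strong convergence in $\hh^1(\Omega)$. Applying \eqref{eq:introkorn} to the differences $\u_n-\u_m$ gives
\begin{align*}
\|\u_n-\u_m\|_{1,\Omega}\leq C\big(\|\bepsilon(\u_n-\u_m)\|_{0,\Omega}+\|\u_n-\u_m\|_{0,\Omega}\big),
\end{align*}
and both terms on the right tend to $0$ ($\|\bepsilon(\u_n)\|_{0,\Omega}\to 0$, and $(\u_n)$ is Cauchy in $\ll^2(\Omega)$ by the strong convergence just obtained). Hence $(\u_n)$ is Cauchy in $\hh^1(\Omega)$ and converges strongly to $\u$ there. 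In particular $\|\u\|_{1,\Omega}=\lim\|\u_n\|_{1,\Omega}=1$, so $\u\neq\zero$. Moreover $\bepsilon(\u)=\lim\bepsilon(\u_n)=\zero$ in $\ll^2(\Omega)$, and since the kernel of $\bepsilon(\cdot)$ is exactly $\rr\mm(\Omega)$ (as recalled after \eqref{eq:rididmotionsdef}), we conclude $\u\in\rr\mm(\Omega)$ is a nonzero rigid motion.

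Finally I would use the continuity of $F$: strong convergence $\u_n\to\u$ in $\hh^1(\Omega)$ together with $|F(\u_n)|\to 0$ yields $F(\u)=0$, so $\u\in N(F)$. Thus $\u$ is a nonzero element of $N(F)\cap\rr\mm(\Omega)$, which contradicts both alternatives in \eqref{eq:zerocap}: it violates $N(F)\cap\rr\mm(\Omega)=\{\zero\}$ (since $\u\neq\zero$) and it violates $N(F)\cap\rr\mm(\Omega)=\emptyset$ (since the intersection is nonempty). This contradiction establishes \eqref{eq:extendedkorns}. The main obstacle I anticipate is the upgrade from weak to strong $\hh^1$ convergence; everything hinges on having the plain Korn inequality \eqref{eq:introkorn} available on these general domains, which is precisely what \cite{ref:duran2004,ref:duran2006} provide, and on the compact embedding from Theorem~\ref{result:inclusion}. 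The only subtlety requiring care is ensuring $F$ need only be continuous (not linear), which is fine since the argument uses $F$ solely through sequential continuity at the limit point $\u$.
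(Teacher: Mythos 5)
Your proof is correct and takes essentially the same approach as the paper's: a contradiction argument with a normalized sequence, the compact embedding $\hh^1(\Omega)\hookrightarrow\ll^2(\Omega)$ from Theorem~\ref{result:inclusion}, Korn's inequality \eqref{eq:introkorn} on Jones/John domains (from \cite{ref:duran2004,ref:duran2006}) applied to differences to upgrade weak to strong $\hh^1$ convergence, and continuity of $F$ to place the unit-norm limit in $N(F)\cap\rr\mm(\Omega)$, contradicting \eqref{eq:zerocap}. The only caveat is that your explicit normalization step (``this forces $|F(\u_n)|\to 0$'') tacitly uses positive homogeneity of $F$, which mere continuity does not supply; the paper makes the identical tacit assumption by positing as its contradiction hypothesis a unit-norm sequence with $\|\bepsilon(\u_n)\|_{0,\Omega}+|F(\u_n)|<\frac{1}{n}$, so this is a subtlety inherited from, not a departure from, the paper's own argument.
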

\begin{proof}
 For a contradiction, suppose there is a sequence $\u_n\in \hh^1(\Omega)$ such that
 \begin{align*}
  \|\u_n\|_{1,\Omega} = 1,\quad \|\bepsilon(\u_n)\|_{0,\Omega} + |F(\u_n)| < \frac{1}{n},\quad\forall\,n\in\nnn.
 \end{align*}
 Since $\{\u_n\}$ is a bounded sequence in the $\hh^1$-norm, we know that there is a subsequence $\{\u_{n_k}\}\subseteq \hh^1(\Omega)$ 
of $\{\u_n\}$ and $\u\in \hh^1(\Omega)$ such that $\u_{n_k}\to \u$ weakly in $\hh^1(\Omega)$. Using 
\autoref{result:inclusion} with $U = \Omega$, $s = 0$ and $t = 1$, we obtain the compactness of the inclusion 
$\hh^1(\Omega)\hookrightarrow \ll^2(\Omega)$. This implies that $\u_{n_k}\to \u$ strongly in $\ll^2(\Omega)$.

On the other hand, we see that $|F(\u_n)| \to 0$ and $\|\bepsilon(\u_n)\|_{0,\Omega}\to 0$. Since $\Omega$ is a Jones domain or John domain, 
we can use Korn's inequality in \eqref{eq:introkorn} to get
\begin{align*}
 \|\u_{n_k}-\u_{n_l}\|_{1,\Omega} \leq C\,\Big(\|\bepsilon(\u_{n_k})-\bepsilon(\u_{n_l})\|_{0,\Omega} + 
\|\u_{n_k}-\u_{n_l}\|_{0,\Omega}\Big).
\end{align*}
Since $\|\bepsilon(\u_n)\|_{0,\Omega}\to 0$ and $\{\u_{n_k}\}$ is strongly convergent in $\ll^2(\Omega)$, the inequality above implies that $\{\u_{n_k}\}$ is a Cauchy sequence in $\hh^1(\Omega)$ and thus $\u_{n_k}\to \u$ 
strongly in $\hh^1(\Omega)$. The continuity of $F$ then gives $|F(\u_{n_k})| \to |F(\u)|$ and therefore $\u\in N(F)$. 
Also, the fact that $\|\bepsilon(\u_n)\|_{0,\Omega}\to 0$ and the continuity of $\bepsilon$ in $\hh^1(\Omega)$ implies 
that $\u\in \rr\mm(\Omega)$ and so $\u$ belongs to $N(F)\cap \rr\mm(\Omega)$. 

If $N(F)\cap \rr\mm(\Omega)$ is the empty set, then we come to an immediate contradiction as we have shown that $\u$ belongs to an empty set.

If, on the other hand, $N(F)\cap \rr\mm(\Omega)=\{\zero\}$, then the limit $\u$ is the zero vector field. However, since $\u_{n_k}\to \u$ and $\|\u_{n_k}\|_{1,\Omega} = 1$ for all $k\in\nnn$, we obtain that $\|\u\|_{1,\Omega} = 1$, which contradicts the fact that $\u = \zero$.
%
\end{proof}
Note that the functional $F$ is meant to be different from the $\ll^2$-norm as this case corresponds to the usual Korn's inequality in \eqref{eq:introkorn} and it is used in the proof of the result above.

We note that no extra conditions on the boundedness of $F$ are needed to establish the result above. One can 
see that a bounded functional would be very useful in many cases as the right hand side of inequality in  \eqref{eq:extendedkorns} might go to infinity. However, in the many applications to the different versions of the Korn's 
inequality, this functional $F$ is also bounded above. In this direction, further assuming that $F$ is a Lipschitz continuous functional in $\hh^1(\Omega)$ with constant $L_F>0$, then we can bound the constant $C>0$ in \eqref{eq:extendedkorns} as follows:
\begin{align*}
    \frac{1}{C} &\leq\, \inf_{\v\in\hh^1(\Omega)} \frac{\|\bepsilon(\v)\|_{0,\Omega} + |F(\v)|}{\|\v\|_{1,\Omega}}\\ &\leq \,(1+L_F) + \inf_{\v\in\hh^1(\Omega)}\frac{|F(\zero)|}{\,\,\,\,\|\v\|_{1,\Omega}}\\
    &=\, 1 + L_F.
\end{align*}
We note that the main steps in the proof of \autoref{result:kornsh1} are the use of the compact embedding $\hh^1(\Omega)\hookrightarrow\ll^2(\Omega)$, and the fact that Korn's inequality in \eqref{eq:introkorn} holds in $\hh^1(\Omega)$. In \cite{ref:duran2004} this version of Korn's inequality was extended to vector fields in $\ww^{1,p}(\Omega)$, whenever $\Omega$ is a Jones (or $(\epsilon,\delta)$) domain and $1<p<+\infty$, whereas authors in \cite{ref:duran2006} extended the same inequality to vector fields in $\ww^{1,p}(\Omega)$ in case $\Omega$ is a John domain and $1<p<+\infty$. Therefore, we can see that the inequality given in \autoref{result:kornsh1} can be further extended to vector fields in the Sobolev space $\ww^{1,p}(\Omega)$, provided $F:\ww^{1,p}(\Omega)\to\rrr$ is continuous, the strain tensor $\bepsilon(\cdot)$ belongs to $\lll^q(\Omega)$, and the inclusion $\ww^{1,p}(\Omega)\hookrightarrow\ll^q(\Omega)$ is compact, for suitable values of $1<p,q<+\infty$.



In the next section we introduce Steklov eigenvalues for the Lam\'e operator and establish the existence of a countable spectrum on Lipschitz domains. Even though the main result of this section was proven for Jones and John domains, Korn's inequality in \autoref{result:kornsh1} is used on Lipschitz domains.

\section{The Steklov-Lam\'e eigenproblem}\label{section:steklovlame}
Let $\Omega$ be an open, bounded and simply connected domain in $\rrr^d$, $d\geq2$, with Lipschitz boundary $\Gamma := \partial\Omega$. 
Assume an isotropic elastic material occupies the region $\Omega$. We denote by $\n$ the outer unit normal vector on $\Gamma$. Let $\u$ be a small displacement of the points in $\Omega$ after some deformation. The stress tensor of the elastic material is
\begin{align*}
 \bsigma(\u) = 2\mu\bepsilon(\u) + \lambda\tr(\bepsilon(\u))\ii\quad\text{in $\Omega$},
\end{align*}
where $\mu$ and $\lambda$ are the usual Lam\'e parameters, satisfying the condition in \eqref{eq:intro-lame-parameters}, namely
\begin{align}
 \mu>0,\quad \lambda + \left(\frac{2}{d}\right)\mu > 0,\label{eq:lame-parameters}
\end{align}
and $\bepsilon(\u):= \frac{1}{2}(\nabla\u+\nabla\u^\transpose)$ is the strain tensor or symmetric part of the deformation 
tensor $\nabla\u$. 
The eigenvalue problem we are interested in reads: find non-zero displacements $\u$ of $\Omega$, 
and frequencies $w\in\ccc$ such that
\begin{subequations}\label{eq:steklovlame}
 \begin{align}
  -\bdiv\,\bsigma(\u) = \zero\quad\text{in $\Omega$},\label{eq:steklovlame1}\\
  \bsigma(\u)\n = w\,p\,\u\quad\text{on $\Gamma$},\label{eq:steklovlame2}
 \end{align}
\end{subequations}
where the parameter $p\in L^\infty(\Gamma)$ 
satisfies
\begin{align}
    p_0 \leq p\quad\text{a.e. on $\Gamma$},\label{eq:lowerboundrho}
\end{align}
for some fixed positive constant $p_0$.
Pairs $(w,\u)$ solving this eigenvalue problem will be called Steklov-Lam\'e eigenpairs. It is easy to see, as for the 
Neumann eigenvalue problem for linear elasticity (usually called the {\it traction free eigenvalue problem} \cite{ref:babuskaosborn1991}), that $w = 0$ is an eigenvalue of this problem with eigenvectors lying 
in $\rr\mm(\Omega)$, the space of all rigid motions defined in \eqref{eq:rididmotionsdef}. 
The divergence theorem implies that $w = 0$ is an Steklov-Lam\'e eigenvalue with eigenvectors belonging to $\rr\mm(\Omega)$ for any Lipschitz domain $\Omega$.
The following result establishes that the Steklov-Lam\'e eigenproblem is self adjoint and has real eigenvalues.
\begin{theorem}\label{result:realevs}
The Steklov-Lam\'e eigenproblem in \eqref{eq:steklovlame} is self-adjoint. In addition, if $(w,\u)$ is an Steklov-Lam\'e eigenpair, then $w\in\rrr$.
\end{theorem}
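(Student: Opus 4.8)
The plan is to establish self-adjointness first by deriving a symmetric bilinear form, and then to deduce that the eigenvalues are real as a standard consequence.

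First I would recast the eigenproblem \eqref{eq:steklovlame} in weak (variational) form. Testing the equilibrium equation $-\bdiv\,\bsigma(\u) = \zero$ against an arbitrary test field $\v\in\hh^1(\Omega)$ and integrating by parts (using the divergence theorem), the volume term produces $\int_\Omega \bsigma(\u):\bepsilon(\v)$ while the boundary term $\int_\Gamma (\bsigma(\u)\n)\cdot\v$ gets replaced, via the Robin condition \eqref{eq:steklovlame2}, by $w\int_\Gamma p\,\u\cdot\v$. This yields the variational identity
\begin{align*}
 a(\u,\v) := \int_\Omega \bsigma(\u):\bepsilon(\v) = w\int_\Gamma p\,\u\cdot\v =: w\,b(\u,\v),\quad\forall\,\v\in\hh^1(\Omega).
\end{align*}
Here I would use the symmetry of $\bsigma(\u)$ together with the identity $\bsigma(\u):\nabla\v = \bsigma(\u):\bepsilon(\v)$ (since $\bsigma(\u)$ is symmetric, its double-dot product with the antisymmetric part of $\nabla\v$ vanishes). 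The key observation for self-adjointness is that both bilinear forms are symmetric: the form $b(\u,\v)=\int_\Gamma p\,\u\cdot\v$ is manifestly symmetric since $p$ is scalar-valued, and $a(\u,\v)$ is symmetric because, expanding $\bsigma(\u):\bepsilon(\v) = 2\mu\,\bepsilon(\u):\bepsilon(\v) + \lambda\,\tr(\bepsilon(\u))\,\tr(\bepsilon(\v))$, each summand is visibly symmetric in $\u$ and $\v$. This symmetry of the associated forms is precisely the statement that the eigenproblem is self-adjoint.

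To deduce that $w\in\rrr$, I would pass to the standard argument for self-adjoint eigenproblems. Suppose $(w,\u)$ is an eigenpair with $\u\neq\zero$; test the weak form against $\v=\overline{\u}$ (the complex conjugate) to obtain $a(\u,\overline{\u}) = w\,b(\u,\overline{\u})$. Taking complex conjugates and using the symmetry of the real bilinear forms gives $a(\u,\overline{\u}) = \overline{a(\u,\overline{\u})}$ and similarly for $b$, so both $a(\u,\overline{\u})$ and $b(\u,\overline{\u})$ are real numbers. Moreover $b(\u,\overline{\u}) = \int_\Gamma p\,|\u|^2 > 0$ is strictly positive whenever the boundary trace of $\u$ does not vanish, by the lower bound \eqref{eq:lowerboundrho}. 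Hence $w = a(\u,\overline{\u})/b(\u,\overline{\u})$ is a quotient of two real numbers and is therefore real.

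The main obstacle I anticipate is the edge case where the boundary integral $b(\u,\overline{\u})$ vanishes, i.e. where the trace of $\u$ on $\Gamma$ is zero; in that situation the quotient is undefined. I would handle this by noting that if $\u|_\Gamma = \zero$ then the Robin condition forces $\bsigma(\u)\n = w\,p\,\u = \zero$ on $\Gamma$, so $\u$ solves the pure traction (Neumann) problem, and the weak form gives $a(\u,\u)=0$, which by the coercivity coming from Korn's inequality in \autoref{result:kornsh1} (together with positivity of the Lam\'e parameters in \eqref{eq:lame-parameters}) forces $\u$ to be a rigid motion; but a rigid motion with zero boundary trace on a set of positive measure is identically zero, contradicting $\u\neq\zero$. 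Thus a genuine eigenfunction always has nonvanishing boundary trace, the denominator is strictly positive, and the realness of $w$ follows without exception.
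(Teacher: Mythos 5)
Your proof is correct, and its first half (self-adjointness) is essentially the paper's argument: both derive the weak identity $a(\u,\v) = w\,b(\u,\v)$ by integration by parts and rest on the symmetry of $a$ and $b$; the paper merely makes this explicit by integrating by parts a second time to reach the Green's identity $w\,(p\,\u,\v)_{0,\Gamma} = -(\u,\bdiv\,\bsigma(\v))_{0,\Omega} + (\u,\bsigma(\v)\n)_{0,\Gamma}$, while you phrase it as symmetry of the two bilinear forms. Where you genuinely differ is the realness argument. The paper writes the eigenvalue as a Rayleigh-type quotient ``for all $\u$ and $\v$ which do not vanish on $\Gamma$'' and concludes realness from the realness of the inner products and the Lam\'e parameters; this is loose on two points that you treat head-on: (i) you test with the complex conjugate $\overline{\u}$, so that both $a(\u,\overline{\u})$ and $b(\u,\overline{\u}) = \int_\Gamma p\,|\u|^2$ are genuinely real, and (ii) you rule out the degenerate case $b(\u,\overline{\u}) = 0$ by showing that a trace-free eigenfunction would satisfy $a(\u,\overline{\u})=0$, hence be a rigid motion with zero boundary values, hence zero. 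This second point is exactly the issue the paper only settles later, in its spectral characterization section, via $\hh\cap\hh^1_0(\Omega)=\{\zero\}$; so your version is self-contained where the paper's is not. Two small inaccuracies to fix: in the edge case you should write $a(\u,\overline{\u})$ rather than $a(\u,\u)$; and the implication $a(\u,\overline{\u})=0 \Rightarrow \bepsilon(\u)=\zero$ comes from the pointwise algebraic positivity $\bsigma(\u):\bepsilon(\overline{\u}) \geq \min\left\{2\mu,\, d\left(\lambda+\tfrac{2}{d}\mu\right)\right\}\|\bepsilon(\u)\|^2$ guaranteed by \eqref{eq:lame-parameters}, not from Korn's inequality; Korn (the first inequality \eqref{eq:introfirstkorn} on $\hh^1_0(\Omega)$, or \autoref{result:kornsh1}) is what you then invoke to conclude $\u=\zero$ from $\bepsilon(\u)=\zero$ together with the vanishing trace.
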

\begin{proof}
 Let $\v$ be a smooth function. Multiplying \eqref{eq:steklovlame1} by $\v$, integrating by parts and using the boundary condition in \eqref{eq:steklovlame2} gives
 \begin{align*}
     w\,(p\,\u,\v)_{0,\Gamma} = (\bsigma(\u),\bepsilon(\v))_{0,\Omega}.
 \end{align*}
 Now, note that
 \begin{align*}
     (\bsigma(\u),\bepsilon(\v))_{0,\Omega} = 2\mu\,(\bepsilon(\u),\bepsilon(\v))_{0,\Omega} + \lambda\,(\div\,\u,\div\,\v)_{0,\Omega}.
 \end{align*}
Then integrating by parts once more we arrive at
\begin{align*}
     w\,(p\,\u,\v)_{0,\Gamma} = -(\u,\bdiv\,\bsigma(\v))_{0,\Omega} + (\u,\bsigma(\v)\n)_{0,\Gamma}.
 \end{align*}
 Since $(p\,\u,\v)_{0,\Gamma} = (\u,p\,\v)_{0,\Gamma}$, the identity above shows that the Steklov-Lam\'e eigenproblem in \eqref{eq:steklovlame} is self-adjoint.
 
 On the other hand, we see from the above that any eigenvalue $w$ satisfies
 \begin{align*}
     w = \frac{2\mu\,(\bepsilon(\u),\bepsilon(\v))_{0,\Omega} + \lambda\,(\div\,\u,\div\,\v)_{0,\Omega}}{(p\,\u,\v)_{0,\Gamma}},
 \end{align*}
 for all $\u$ and $\v$ which do not vanish on $\Gamma$. Since the inner products $(\cdot,\cdot)_{0,\Omega}$ and $(\cdot,\cdot)_{0,\Gamma}$, and the Lam\'e parameters $\lambda$ and $\mu$ are real, we obtain from identity above that the eigenvalues $w$ are also real.
\end{proof}

We propose the use of a weak formulation to study both the continuous and the discrete spectra of this problem. However, the zero eigenvalue mentioned just before implies that, as suggested in  \cite{ref:babuskaosborn1991}, a shift needs to be added to the formulation to obtain a coercive bilinear form.
We employ the following weak formulation: find Steklov-Lam\'e eigenpairs $\u\in\hh^1(\Omega)$ and $w\in\rrr$, $\u\neq\zero$, such that
\begin{align}
    a(\u,\v) = \kappa\, b(\u,\v)\quad\forall\,\v\in\hh^1(\Omega),\label{eq:weakform}
\end{align}
where $\kappa := w+1$ and the bilinear forms $a$ and $b$ are defined as
\begin{align*}
    a(\u,\v) &:= (\bsigma(\u),\bepsilon(\v))_{0,\Omega} + (p\,\u,\v)_{0,\Gamma}\quad\forall\,\u,\v\in\hh^1(\Omega),\\
    b(\u,\v) &:= (p\,\u,\v)_{0,\Gamma}\quad\forall\,\u,\v\in\hh^1(\Omega).
\end{align*}
We will also be using the induced operators of these bilinear forms, $A,B:\hh^1(\Omega)\to \hh^1(\Omega)'$, defined as
\begin{align}
    [A(\u),\v]_{\hh^1(\Omega)} = a(\u,\v),\quad [B(\u),\v]_{\hh^1(\Omega)} = b(\u,\v),\quad\forall\,\u,\v\in\hh^1(\Omega).
\end{align}
Our aim is to characterize the Steklov-Lam\'e eigenpairs through the use of \eqref{eq:weakform}. To this end, we need to establish some important properties of the corresponding solution operator. As we will see, this will be achieved with the use of Korn's inequality as given in \autoref{result:kornsh1}, the properties of the trace operator along the boundary, and the application of the well known Spectral theorem.

In the next section we provide a spectral characterization of the continuous problem given by \eqref{eq:weakform}.




\subsection{Spectral characterization}\label{section:spectrum}
%
%
Let us first recall some properties of the trace operator on Lipschitz domains. The next result concerns the existence of the trace operator for smooth vector fields.
\begin{lemma}[{\cite[Lemme 1.3-5]{ref:raviart1983}}]\label{result:definitiontrace}
 Let $U$ be a bounded Lipschitz domain in $\rrr^d$, $d\geq 2$ and let $C^\infty_0(\overline{U})$ be the set of all infinitely differentiable functions on $\overline{U}$ with support in an open set $O$ such that $\overline{U}\subseteq O$. Let $\gamma_0:C^\infty_0(\overline{U})\to L^2(\partial U)$ be the mapping defined by
 \begin{align}
     \gamma_0(v) = v|_{\partial U},\quad\forall\,v\in C^\infty_0(\overline{U}).\label{eq:definitiontrace}
 \end{align}
 Then there exists a constant $c>0$ such that
\begin{align*}
    \|\gamma_0(v)\|_{0,\partial U}\leq\, c\|v\|_{1,U},\quad\forall\,v\in C^\infty_0(\overline{U}).
\end{align*}
\end{lemma}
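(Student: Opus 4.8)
The plan is to reduce the global estimate to an elementary computation on a region lying above a Lipschitz graph, and then to patch the local estimates together with a partition of unity. The definition of a Lipschitz domain is exactly what licenses the local graph representation, so this is where I would begin organizing the argument.

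First I would record the estimate on a model graph domain. Suppose, after a rigid rotation, $\partial U$ is locally the graph $x_d = \phi(x')$ of a Lipschitz function $\phi$ with constant $L$, with $U$ lying above it, and write $x = (x',x_d)$. For $v\in C^\infty_0(\overline U)$ the fundamental theorem of calculus and the compact support of $v$ give
\begin{align*}
\big(v(x',\phi(x'))\big)^2 = -\int_{\phi(x')}^{\infty}\partial_{x_d}\big(v^2\big)\,dx_d = -2\int_{\phi(x')}^{\infty} v\,\partial_{x_d} v\,dx_d.
\end{align*}
Integrating against the surface measure $d\sigma = \sqrt{1+|\nabla\phi(x')|^2}\,dx'$, using $\sqrt{1+|\nabla\phi|^2}\le\sqrt{1+L^2}$, Cauchy--Schwarz, and Young's inequality $2ab\le a^2+b^2$, I would obtain
\begin{align*}
\int_{\partial U} v^2\,d\sigma \le \sqrt{1+L^2}\int_U \big(v^2+|\nabla v|^2\big)\,dx = \sqrt{1+L^2}\,\|v\|_{1,U}^2.
\end{align*}
This vertical-integration bound is the analytic heart of the lemma and is entirely elementary.

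Next I would globalize. Invoking the Lipschitz structure, I cover $\partial U$ by finitely many open sets $O_1,\dots,O_N$ in each of which, after a rigid motion, $\partial U\cap O_j$ is a Lipschitz graph and $U\cap O_j$ lies on one side. I then fix a smooth partition of unity $\{\chi_j\}_{j=1}^N$ with $\mathrm{supp}\,\chi_j\subset O_j$ and $\sum_j\chi_j = 1$ on $\partial U$. For each $j$, the function $\chi_j v$ is smooth with compact support inside $O_j$; extending it by zero to the full graph domain and applying the model estimate yields $\|\chi_j v\|_{0,\partial U}\le c_j\|\chi_j v\|_{1,U}$, while the product rule together with the boundedness of $\chi_j$ and $\nabla\chi_j$ gives $\|\chi_j v\|_{1,U}\le c_j'\|v\|_{1,U}$. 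The triangle inequality followed by summation,
\begin{align*}
\|v\|_{0,\partial U} = \Big\|\sum_j \chi_j v\Big\|_{0,\partial U} \le \sum_j \|\chi_j v\|_{0,\partial U} \le \Big(\sum_j c_j c_j'\Big)\|v\|_{1,U},
\end{align*}
produces the claimed inequality with $c:=\sum_j c_j c_j'$, a constant depending only on $U$ through the covering, the local Lipschitz constants, and the chosen partition of unity.

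The main obstacle is the localization machinery rather than any single estimate: one must use the definition of a Lipschitz domain to produce the finite graph covering, verify that the rigid motions and the flattening preserve the comparability of the boundary $L^2$-norm and the interior $H^1$-norm, and confirm that the constants remain uniform and geometry-dependent only. Once the graph representation is secured, the vertical integration and the partition-of-unity recombination are routine.
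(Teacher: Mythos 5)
Your proof is correct: the vertical fundamental-theorem-of-calculus estimate on a Lipschitz graph domain, followed by a finite graph covering and partition of unity, is exactly the classical argument for the $H^1$-to-$L^2(\partial U)$ trace inequality. Note that the paper itself offers no proof here — it simply cites Raviart--Thomas (Lemme 1.3-5) — and the proof in that reference is essentially the same localization-and-flattening argument you gave, so your proposal matches the intended proof rather than offering a genuinely different route.
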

The following result establishes the extension of the trace operator as defined in \eqref{eq:definitiontrace}.
\begin{theorem}[{see e.g. \cite[Theorem 1.5]{ref:gatica2014}}]\label{result:extensiontrace}
Let $U$ be a bounded Lipschitz domain in $\rrr^d$, $d\geq 2$. Then the mapping $\gamma_0:C^\infty_0(\overline{U})\to L^2(\partial U)$ can be extended by continuity and density to a linear and bounded operator $\gamma_0:H^1(U)\to L^2(\partial U)$ such that it satisfies \eqref{eq:definitiontrace}.
\end{theorem}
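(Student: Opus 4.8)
The plan is to apply the standard extension-by-continuity principle (the bounded-linear-transformation theorem), whose three ingredients are the a priori bound of \autoref{result:definitiontrace}, the density of $C^\infty_0(\overline{U})$ in $H^1(U)$, and the completeness of $L^2(\partial U)$. The only place where the Lipschitz hypothesis enters essentially is the density statement; everything else is pure functional analysis.

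First I would fix $v\in H^1(U)$ and, using density, choose a sequence $\{v_n\}\subset C^\infty_0(\overline{U})$ with $v_n\to v$ in $H^1(U)$. Since $\{v_n\}$ converges it is Cauchy in $H^1(U)$, and the linearity of $\gamma_0$ on smooth functions together with the estimate $\|\gamma_0(v_n)-\gamma_0(v_m)\|_{0,\partial U}=\|\gamma_0(v_n-v_m)\|_{0,\partial U}\leq c\|v_n-v_m\|_{1,U}$ shows that $\{\gamma_0(v_n)\}$ is Cauchy in $L^2(\partial U)$. By completeness this sequence has a limit in $L^2(\partial U)$, which I would take as the definition of the extended $\gamma_0(v)$.

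Next I would verify that the extension is well defined and has the claimed properties. Well-definedness follows from the usual interlacing trick: if $\{v_n\}$ and $\{\tilde v_n\}$ both converge to $v$ in $H^1(U)$, then the merged sequence $v_1,\tilde v_1,v_2,\tilde v_2,\dots$ also converges to $v$, hence is Cauchy, so its image is Cauchy in $L^2(\partial U)$ and the two candidate limits must coincide. Linearity is inherited from the linearity of $\gamma_0$ on $C^\infty_0(\overline{U})$ by passing to the limit, and the consistency relation \eqref{eq:definitiontrace} holds because for $v\in C^\infty_0(\overline{U})$ one may use the constant sequence $v_n=v$. Finally, letting $n\to\infty$ in $\|\gamma_0(v_n)\|_{0,\partial U}\leq c\|v_n\|_{1,U}$ and using continuity of both norms yields $\|\gamma_0(v)\|_{0,\partial U}\leq c\|v\|_{1,U}$ for all $v\in H^1(U)$, i.e. boundedness with the same constant $c$.

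The step I expect to require the most care — and the only genuinely geometric one — is invoking the density of $C^\infty_0(\overline{U})$ in $H^1(U)$, which is precisely where the Lipschitz regularity of $\partial U$ is used (through a local extension/mollification argument near the boundary). Once density is granted, the remainder is the routine continuity-extension principle and carries no further subtlety.
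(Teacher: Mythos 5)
Your proof is correct, and it is essentially the canonical argument: the paper itself does not prove this statement but delegates it to the cited reference, and that reference's proof is exactly the extension-by-density scheme you describe (uniform bound from \autoref{result:definitiontrace}, density of $C^\infty_0(\overline{U})$ in $H^1(U)$ on a Lipschitz domain, completeness of $L^2(\partial U)$, then well-definedness, linearity, and the bound with the same constant $c$ passing to the limit). You also correctly isolate the density statement as the one genuinely geometric ingredient needed beyond the a priori estimate already granted by the preceding lemma.
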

The trace operator for vector fields in $\hh^1(\Omega)$, also denoted by $\gamma_0$, is then defined componentwise by
\begin{align}
    \gamma_0(\v) := (\gamma_0(v_1),\ldots,\gamma_0(v_d)),\quad \forall\,\v\in\hh^1(\Omega).\label{eq:vectortrace}
\end{align}
Thus, by \autoref{result:extensiontrace} the operator $\gamma_0:\hh^1(\Omega)\to\ll^2(\Gamma)$ is a linear and bounded mapping with continuity constant $c>0$.

We note that the continuity of the trace operator $\gamma_0$ in $\hh^1(\Omega)$ implies that the bilinear forms $a$ and $b$ are continuous bilinear forms, with
\begin{subequations}\label{eq:boundsforab}
 \begin{align}
    |\,a(\u,\v)|\leq&\, \max\big\{\lambda+2\mu,c^2\|p\|_{\infty,\Gamma}\big\} \|\u\|_{1,\Omega}\|\v\|_{1,\Omega},\label{eq:boundfora}\\
    |\,b(\u,\v)|\leq&\, \big(c^2\|p\|_{\infty,\Gamma}\big) \|\u\|_{1,\Omega}\|\v\|_{1,\Omega},\label{eq:boundforb}
\end{align}
\end{subequations}
where $c>0$ is the continuity constant of the trace operator.
From the definition of the bilinear form $b$ we see that $b(\u,\u) \geq 0$ for all $\u\in\hh^1(\Omega)$, and $b(\u,\v) = 0$ whenever $\u$ or $\v$ vanish along the boundary $\Gamma$. This directly implies that the kernel of the induced operator $B$ is
\begin{align*}
    N(B) = \hh^1_0(\Omega).
\end{align*}
On the other hand, the Rayleigh quotient then gives
\begin{align}
 \kappa = \frac{a(\u,\u)}{b(\u,\u)},\quad\forall\,\u\in\hh^1(\Omega),\,\, b(\u,\u)>0.\label{eq:steklovrayleigh}
\end{align}
We then have that, thanks to \autoref{result:realevs} and the Rayleigh quotient above, all possible eigenvalues $\kappa$ of \eqref{eq:weakform} are non-negative provided the associated eigenvectors $\u\in\hh^1(\Omega)$ satisfy $b(\u,\u)>0$. 

We see that 
for any $\u\in\hh^1_0(\Omega)$, $b(\u,\v) = 0$ for all $\v\in\hh^1(\Omega)$. If such $\u\in\hh^1_0(\Omega)$ also satisfies $a(\u,\v) = 0$ for all $\v\in\hh^1(\Omega)$, then $\kappa\in\rrr$ is not an eigenvalue but it belongs to the spectrum of the eigenproblem. Thus, we need to show, using the Korn's inequality in $\hh^1_0(\Omega)$, that the kernels of the induced operators $A$ and $B$ do not share any non-zero elements.
To see this, let us consider the following subset of $\hh^1(\Omega)$:
\begin{align}
    \hh := \Big\{\u\in\hh^1(\Omega):\, \big(\bsigma(\u),\bepsilon(\v)\big)_{0,\Omega} = 0,\quad\forall\,\v\in\hh^1_0(\Omega)\Big\}.\label{eq:hspace}
\end{align}
We note that the space $\hh$ is a closed subspace of $\hh^1(\Omega)$ with the usual norm $\|\cdot\|_{1,\Omega}$. Next, we show that $\hh\cap\hh^1_0(\Omega) = \{\zero\}$. Indeed, if $\u\in\hh\cap\hh^1_0(\Omega)$, then the definition of $\hh$ implies that 
%
\begin{align*}
    \big(\bsigma(\u),\bepsilon(\v)\big)_{0,\Omega} = 0,\quad\forall\,\v\in\hh^1_0(\Omega).
\end{align*}
Taking $\v:=\u$ in the expression above and after some algebraic manipulations we arrive at
\begin{align*}
    0 = \big(\bsigma(\u),\bepsilon(\u)\big)_{0,\Omega} \geq \min\left\{2\mu,d\left(\lambda + \left(\frac{2}{d}\right)\mu\right)\right\}\|\bepsilon(\u)\|_0^2.
\end{align*}
Korn's first inequality in \eqref{eq:introfirstkorn} for vector fields in $\hh^1_0(\Omega)$ shows that $\|\bepsilon(\u)\|_{0,\Omega}$ and $\|\u\|_{1,\Omega}$ are equivalent norms and so $\u = \zero$, showing that $\hh$ and $\hh^1_0(\Omega)$ only share the zero vector. 

On the other hand, we have that $\hh^1(\Omega) = \hh + \hh^1_0(\Omega)$. To see this, let us start by picking an element $\u$ in $\hh^1(\Omega)$. Consider the problem of finding $\z\in \hh^1_0(\Omega)$ such that 
\begin{align}
    \big(\bsigma(\z),\bepsilon(\v)\big)_{0,\Omega} = \big(\bsigma(\u),\bepsilon(\v)\big)_{0,\Omega}\quad\forall\,\v\in\hh^1_0(\Omega).\label{eq:auxiliaryproblem}
\end{align}
Notice that $\big(\bsigma(\u),\bepsilon(\cdot)\big)_{0,\Omega}$ defines a bounded and linear operator in $\hh^1_0(\Omega)$. In addition, the bilinear form $\big(\bsigma(\cdot),\bepsilon(\cdot)\big)_{0,\Omega}$ is coercive in $\hh^1_0(\Omega)$. Thus, the Lax-Milgram lemma implies that there is a unique solution $\z\in\hh^1_0(\Omega)$ of the problem in \eqref{eq:auxiliaryproblem}.
Now define $\w:= \u - \z$. Thanks to the problem in \eqref{eq:auxiliaryproblem} we see that
\begin{align*}
    \int_\Omega \bsigma(\w):\bepsilon(\v) = 0\quad\forall\,\v\in\hh^1_0(\Omega).
\end{align*}
We have that $\w\in\hh$ and $\z\in\hh^1_0(\Omega)$ are such that $\u = \w + \z$, proving that $\hh^1(\Omega)$ can be decomposed as the sum of $\hh$ and $\hh^1_0(\Omega)$. Altogether, we have the following result.
\begin{theorem}\label{result:decompostion}
With $\hh$ defined as in \eqref{eq:hspace}, the space $\hh^1(\Omega)$ can be decomposed as follows:
\begin{align}
    \hh^1(\Omega) = \hh \oplus \hh^1_0(\Omega).\label{eq:spectraldecomposition}
\end{align}
\end{theorem}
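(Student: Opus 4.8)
The plan is to verify the two defining properties of an internal direct sum: that $\hh$ and $\hh^1_0(\Omega)$ intersect only in the zero field, and that together they span all of $\hh^1(\Omega)$. Both facts have essentially been assembled in the discussion preceding the statement, so the proof amounts to organizing those two arguments, whose common engine is Korn's first inequality \eqref{eq:introfirstkorn} together with the pointwise ellipticity of the Cauchy tensor guaranteed by the Lam\'e condition \eqref{eq:lame-parameters}.

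First I would establish $\hh\cap\hh^1_0(\Omega)=\{\zero\}$. Taking $\u$ in the intersection and using $\v:=\u$ as a test field in the defining relation of $\hh$ in \eqref{eq:hspace} forces $(\bsigma(\u),\bepsilon(\u))_{0,\Omega}=0$. Expanding $\bsigma$ and bounding below by the ellipticity constant $\min\{2\mu,\,d(\lambda+(2/d)\mu)\}$, which is strictly positive under \eqref{eq:lame-parameters}, yields $\|\bepsilon(\u)\|_{0,\Omega}=0$. Since $\u\in\hh^1_0(\Omega)$, Korn's first inequality \eqref{eq:introfirstkorn} makes $\|\bepsilon(\cdot)\|_{0,\Omega}$ an equivalent norm on $\hh^1_0(\Omega)$, and hence $\u=\zero$.

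Next I would prove $\hh^1(\Omega)=\hh+\hh^1_0(\Omega)$. Given $\u\in\hh^1(\Omega)$, the map $\v\mapsto(\bsigma(\u),\bepsilon(\v))_{0,\Omega}$ is a bounded linear functional on $\hh^1_0(\Omega)$, while $(\bsigma(\cdot),\bepsilon(\cdot))_{0,\Omega}$ is coercive on $\hh^1_0(\Omega)$ by the same ellipticity-plus-Korn argument. The Lax--Milgram lemma therefore produces a unique $\z\in\hh^1_0(\Omega)$ solving the auxiliary problem \eqref{eq:auxiliaryproblem}. Setting $\w:=\u-\z$, subtraction shows $(\bsigma(\w),\bepsilon(\v))_{0,\Omega}=0$ for every $\v\in\hh^1_0(\Omega)$, so $\w\in\hh$; thus $\u=\w+\z$ exhibits the required decomposition. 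Combined with the trivial intersection, this gives the internal direct sum \eqref{eq:spectraldecomposition}.

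I expect the only genuine subtlety to be the coercivity step, which is where the hypotheses do the real work: coercivity of $(\bsigma(\cdot),\bepsilon(\cdot))_{0,\Omega}$ on $\hh^1_0(\Omega)$ fails on all of $\hh^1(\Omega)$ because rigid motions lie in the kernel of $\bepsilon$, and it is precisely the zero Dirichlet trace that lets Korn's first inequality upgrade control of $\|\bepsilon(\u)\|_{0,\Omega}$ to control of the full $\hh^1$-norm. One should also confirm that $\hh$ is closed, so that the splitting is topological, which follows from continuity of the defining bilinear form together with the boundedness of $\bepsilon(\cdot)$; uniqueness of the decomposition is then automatic once the intersection is known to be trivial.
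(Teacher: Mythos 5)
Your proposal is correct and follows essentially the same route as the paper: the trivial intersection via testing with $\v:=\u$, the ellipticity bound $\min\{2\mu,\,d(\lambda+(2/d)\mu)\}$, Korn's first inequality on $\hh^1_0(\Omega)$, and then the splitting via Lax--Milgram applied to the auxiliary problem \eqref{eq:auxiliaryproblem} with $\w:=\u-\z\in\hh$. Your closing remarks on the closedness of $\hh$ and uniqueness of the decomposition are sound additions, though the paper treats them as implicit.
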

These results together with the Rayleigh quotient
imply that all eigenvalues are real, that eigenfunctions corresponding to non-negative eigenvalues lie in $\hh$, and $\hh^1_0(\Omega)$ constitutes a generalized eigenspace of the corresponding solution operator, to be defined in the next section. 

In the following section we prove the existence of a countable Steklov-Lam\'e spectrum.

\subsection{Existence of a countable spectrum}
The proof of the existence of a countable spectrum for the Steklov-Lam\'e eigenproblem turns out to depend on Korn's inequality in the form given in \eqref{eq:extendedkorns} and the properties of the trace operator. Korn's inequality is used to show that the bilinear form $a$ is $\hh^1$-elliptic, while the properties of the trace operator are used to show the compactness of the corresponding solution operator.

Let us first recall the following result on the compactness of the trace operator as defined in \eqref{eq:definitiontrace} and \autoref{result:extensiontrace}. The proof for this result can be found in \cite[Theorem 3.81]{ref:demengel2012} in the case of smooth domains, and in \cite[Theorem 6.2]{ref:necas2011} in the case of Lipschitz domains. An extension to {\it $W^{1,p}$-extension domains} was given in \cite{ref:biegert2009}.
\begin{theorem}\label{result:compactnesstrace}
 Let $U$ be a bounded Lipschitz domain in $\rrr^d$, $d\geq 2$. Then the mapping $\gamma_0:H^1(U)\to L^{q}(\partial U)$ is compact if $d = 2$ and $1\leq q<+\infty$, or if $d>3$ and $1\leq q < \frac{2(d-1)}{(d-2)}$.
\end{theorem}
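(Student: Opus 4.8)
The plan is to factor the trace operator through the fractional Sobolev space on the boundary and to exploit the half-derivative gained by taking a trace, combined with a compact Sobolev embedding on the boundary manifold. Concretely, I would write $\gamma_0$ as the composition
\[
H^1(U)\ \xrightarrow{\ \gamma_0\ }\ H^{1/2}(\partial U)\ \hookrightarrow\ L^q(\partial U),
\]
where the first arrow is the trace into the fractional-order space and the second is the inclusion. Since the composition of a bounded operator with a compact operator is compact, it suffices to prove that the first map is bounded and that the inclusion $H^{1/2}(\partial U)\hookrightarrow L^q(\partial U)$ is compact for the stated range of $q$.

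For the first ingredient I would invoke the sharp trace theorem for Lipschitz domains, which refines \autoref{result:extensiontrace}: the map $\gamma_0$ extends to a bounded linear operator $H^1(U)\to H^{1/2}(\partial U)$. This is the standard statement that the trace gains regularity of order $1/2$, and it is available on bounded Lipschitz domains.

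For the second ingredient I would treat $\partial U$ as a compact $(d-1)$-dimensional Lipschitz manifold and count Sobolev exponents there. On an $n$-dimensional manifold the embedding $H^{s}\hookrightarrow L^{q}$ is continuous for $q\le \frac{2n}{n-2s}$ when $2s<n$; with $n=d-1$ and $s=1/2$ this produces the critical exponent $\frac{2(d-1)}{d-2}$. To upgrade continuity to compactness below the critical exponent, I would interpolate: given $q<\frac{2(d-1)}{d-2}$, choose $s\in(0,1/2)$ so that $H^{s}(\partial U)\hookrightarrow L^{q}(\partial U)$ is still continuous (possible because the critical exponent depends continuously on $s$ and strictly exceeds $q$ for $s$ slightly below $1/2$), and then use the compact inclusion $H^{1/2}(\partial U)\hookrightarrow H^{s}(\partial U)$, which is the boundary analogue of \autoref{result:inclusion}. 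Composing the compact inclusion with the continuous Sobolev embedding yields the desired compact map. In the borderline case $d=2$ the boundary is one-dimensional, so $2s=n=1$ and $H^{1/2}$ embeds (compactly, after the same interpolation step) into every $L^q$ with $q<+\infty$, which accounts for the unrestricted range there.

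I expect the main obstacle to be the low regularity of the boundary. Since $\partial U$ is only Lipschitz, the spaces $H^{s}(\partial U)$ and their embeddings must be set up intrinsically, through a Lipschitz atlas and a subordinate partition of unity, rather than by assuming a smooth manifold, and the Rellich--Kondrachov compactness must then be transported to this setting; this is precisely where the restriction on $d$ and the exponent $\frac{2(d-1)}{d-2}$ genuinely enter. The half-derivative trace statement is likewise more delicate on Lipschitz than on smooth domains, so I would lean on the cited references \cite{ref:demengel2012,ref:necas2011,ref:biegert2009} for these two technical points and reserve the elementary factorization-plus-interpolation argument as the self-contained core of the proof.
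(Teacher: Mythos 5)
Your outline is correct, but there is nothing in the paper to compare it against: the paper does not prove this theorem at all. It quotes it as a known result and defers the proof to the literature --- \cite[Theorem 3.81]{ref:demengel2012} for smooth domains, \cite[Theorem 6.2]{ref:necas2011} for Lipschitz domains, and \cite{ref:biegert2009} for $W^{1,p}$-extension domains. Judged on its own merits, your factorization argument is the standard proof and it is sound: boundedness of $\gamma_0\colon H^1(U)\to H^{1/2}(\partial U)$ on a bounded Lipschitz domain, followed by compactness of $H^{1/2}(\partial U)\hookrightarrow L^q(\partial U)$, the latter obtained by inserting an intermediate space $H^{s}(\partial U)$ with $s<1/2$ chosen so that the critical exponent $\tfrac{2(d-1)}{(d-1)-2s}$ still exceeds $q$, and composing the compact inclusion $H^{1/2}(\partial U)\hookrightarrow H^{s}(\partial U)$ (the boundary analogue of \autoref{result:inclusion}) with the continuous Sobolev embedding $H^{s}(\partial U)\hookrightarrow L^q(\partial U)$. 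The exponent bookkeeping with $n=d-1$ and $s=1/2$ correctly reproduces $\tfrac{2(d-1)}{d-2}$, and the one-dimensional boundary case $d=2$ is handled correctly. What your explicit argument buys over the paper's citation: first, it makes transparent where the critical exponent comes from, namely the Sobolev embedding on the $(d-1)$-dimensional boundary applied to the half derivative gained by the trace; second, it works for every $d\geq 3$, which exposes the condition ``$d>3$'' in the statement as a typo for ``$d\geq 3$'' --- a point of substance, since the paper itself invokes the theorem with $q=2$ on three-dimensional domains. What it costs: the two genuinely technical ingredients (the sharp $H^{1/2}$ trace theorem on Lipschitz domains, and the Rellich--Sobolev theory on the Lipschitz hypersurface $\partial U$ set up via a Lipschitz atlas and partition of unity) are themselves deferred to references, essentially the same ones the paper cites, so your proof is self-contained only modulo that same literature.
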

In particular, this result implies that the trace operator $\gamma_0:\hh^1(\Omega)\to\ll^2(\Gamma)$ as defined in \eqref{eq:vectortrace} is compact.

As a first attempt let us now define a solution operator for the weak form in \eqref{eq:weakform}. We define this solution operator $T:\ll^2(\Gamma)\to\hh^1(\Omega)$ as $T(\f) = \u$, where $\u\in\hh^1(\Omega)$ and $\f\in\ll^2(\Gamma)$ satisfy the source problem
\begin{align}
 a(\u,\v) = b(\f,\v),\quad\forall\,\v\in\hh^1(\Omega).\label{eq:soloperator}
\end{align}
Note that the boundedness of the bilinear form $b$ as used in \eqref{eq:soloperator} is slightly different from that in \eqref{eq:boundforb}. In fact, since we only have $\f\in\ll^2(\Gamma)$, we have the following bound for $b$
\begin{align}
    |b(\f,\v)|\leq \big(c\|p\|_{\infty,\Gamma}\big)\|\f\|_{0,\Gamma}\|\v\|_{1,\Omega},\quad\forall\,\f\in\ll^2(\Gamma),\,\,\forall\,\v\in\hh^1(\Omega).\label{eq:2ndboundforb}
\end{align}
We take advantage of the continuity of the trace operator $\gamma_0:\hh^1(\Omega)\to\ll^2(\Gamma)$ to show that $a(\cdot,\cdot)$ is $\hh^1$-elliptic. The following result is a direct application of \autoref{result:kornsh1}.
\begin{theorem}\label{theorem:ellipticity}
Let $\Omega$ be a Lipschitz continuous domain in $\rrr^d$, $d\geq 2$ with boundary $\Gamma:=\partial\Omega$. Also let $\gamma_0:\hh^1(\Omega)\to\ll^2(\Gamma)$ be the trace map. Then there is a constant $C>0$ such that 
 \begin{align}
  \|\u\|_{1,\Omega} \leq C\Big(\|\bepsilon(\u)\|_{0,\Omega} + 
\|\gamma_0(\u)\|_{0,\Gamma}\Big),\quad\forall\,\u\in\hh^1(\Omega).\label{eq:kornsineq}
 \end{align}
\end{theorem}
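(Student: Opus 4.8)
The plan is to deduce this statement as a direct corollary of the generalized Korn's inequality in \autoref{result:kornsh1}. That theorem says: for a continuous mapping $F:\hh^1(\Omega)\to\rrr$ whose zero set $N(F)$ meets the rigid motions $\rr\mm(\Omega)$ only in $\{\zero\}$ (or not at all), there is a constant $C>0$ with $\|\u\|_{1,\Omega}\leq C(\|\bepsilon(\u)\|_{0,\Omega}+|F(\u)|)$ for all $\u\in\hh^1(\Omega)$. Since the desired inequality \eqref{eq:kornsineq} has exactly this shape with the term $\|\gamma_0(\u)\|_{0,\Gamma}$ playing the role of $|F(\u)|$, the entire task reduces to exhibiting an admissible functional $F$ built from the boundary trace and checking the kernel hypothesis.

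First I would define $F:\hh^1(\Omega)\to\rrr$ by $F(\u):=\|\gamma_0(\u)\|_{0,\Gamma}$. This is continuous: \autoref{result:extensiontrace} (extended componentwise as in \eqref{eq:vectortrace}) gives that $\gamma_0:\hh^1(\Omega)\to\ll^2(\Gamma)$ is linear and bounded, and the $\ll^2(\Gamma)$-norm is continuous, so $F$ is the composition of a bounded linear map with a continuous norm. Note $F$ is not linear but \autoref{result:kornsh1} only requires continuity, so this is fine. Moreover $|F(\u)|=F(\u)=\|\gamma_0(\u)\|_{0,\Gamma}$, so the right-hand side of \eqref{eq:extendedkorns} becomes precisely that of \eqref{eq:kornsineq}.

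Next I would verify the kernel condition \eqref{eq:zerocap}. We have $N(F)=\{\u\in\hh^1(\Omega):\gamma_0(\u)=\zero\}=\hh^1_0(\Omega)$. The claim is that $N(F)\cap\rr\mm(\Omega)=\{\zero\}$. The key point is that a rigid motion $\v(\x)=\a+\bb\x$ with $\bb^\transpose=-\bb$ that vanishes on $\Gamma$ must vanish identically. The cleanest argument: a nonzero rigid motion is a (real-)analytic, indeed affine, vector field, and its zero set is an affine subspace of dimension at most $d-1$; since $\Gamma=\partial\Omega$ is the boundary of an open set in $\rrr^d$ it cannot be contained in any such proper affine subspace, so the only rigid motion with zero trace on $\Gamma$ is $\zero$. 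Alternatively, one can argue directly that $\a+\bb\x=\zero$ on a $(d-1)$-dimensional surface forces $\a=\zero$ and $\bb=\zero$ by evaluating at enough affinely independent boundary points. Either route establishes $\hh^1_0(\Omega)\cap\rr\mm(\Omega)=\{\zero\}$.

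The main obstacle is this kernel verification, since everything else is a mechanical identification of terms. I expect the subtlety to lie in justifying rigorously that a vanishing trace on $\Gamma$ forces a rigid motion to be zero; the affine-subspace/analyticity argument above handles it cleanly, but one should be careful that the trace being zero on all of $\Gamma$ (not just a subset) is what is used, so that the boundary genuinely spans enough directions. With the hypothesis $N(F)\cap\rr\mm(\Omega)=\{\zero\}$ confirmed and $F$ continuous, \autoref{result:kornsh1} applies verbatim and yields the constant $C>0$ and the inequality \eqref{eq:kornsineq}, completing the proof.
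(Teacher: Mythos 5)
Your proposal is correct and follows essentially the same route as the paper: the paper's proof also applies \autoref{result:kornsh1} with the functional $F(\u):=\|\gamma_0(\u)\|_{0,\Gamma}$, observes that $F$ is (Lipschitz) continuous because the trace operator is linear and bounded, and identifies $N(F)=\hh^1_0(\Omega)$. The only divergence is the verification that $\hh^1_0(\Omega)\cap\rr\mm(\Omega)=\{\zero\}$: you argue geometrically that a nonzero affine field $\a+\bb\x$ cannot vanish on all of $\Gamma$ (which works, but requires the extra care you flag — zero trace means zero only a.e.\ on $\Gamma$, plus the fact that $\partial\Omega$ is not contained in a proper affine subspace), whereas the paper disposes of this step in one line by invoking Korn's first inequality on $\hh^1_0(\Omega)$: a rigid motion $\v\in\hh^1_0(\Omega)$ has $\bepsilon(\v)=\zero$, and the equivalence of $\|\bepsilon(\cdot)\|_{0,\Omega}$ with $\|\cdot\|_{1,\Omega}$ on $\hh^1_0(\Omega)$ forces $\v=\zero$, with no measure-theoretic or affine-span considerations needed.
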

\begin{proof}
 The proof follows by using \autoref{result:kornsh1} with the functional $F(\u) := 
\|\gamma_0(\u)\|_{0,\Gamma}$. In fact, Since the trace operator is linear and bounded, the functional $F$ is Lipschitz continuous. Also, we immediately see that $\hh^1_0(\Omega) = N(F)$. For a vector field $\v\in N(F)\cap\rr\mm(\Omega) = \hh^1_0(\Omega)\cap\rr\mm(\Omega)$, we can use Korn's inequality in $\hh^1_0(\Omega)$ once again to show that $\v=\zero$. Thus, $N(F)\cap\rr\mm(\Omega) = \{\zero\}$. Therefore, the result follows from a straightforward 
application of \autoref{result:kornsh1}.
\end{proof}
Now, using the inequality in \eqref{eq:kornsineq}, for any $\u\in\hh^1(\Omega)$, we can get
\begin{align*}
 a(\u,\u) &\geq \min\left\{2\mu,d\left(\lambda + \left(\frac{2}{d}\right)\mu\right)\right\}\|\bepsilon(\u)\|_0^2 + 
p_0\|\u\|_{0,\Gamma}^2\\
	  &\geq \frac{1}{2C^2}\min\left\{p_0,2\mu,d\left(\lambda + \left(\frac{2}{d}\right)\mu\right)\right\}\|\u\|_{1,\Omega}^2
\end{align*}
Thus, the bilinear form is $\hh^1$-elliptic, i.e. $\alpha \|\u\|_{1,\Omega}^2 \leq\, a(\u,\u)$, for all $\u\in\hh^1(\Omega)$, with $\alpha>0$ defined as
\begin{align}
\alpha := \frac{1}{2C^2}\min\left\{p_0,2\mu,d\left(\lambda +\left(\frac{2}{d}\right)\mu\right)\right\}.\label{eq:ellipticconstant}
\end{align}
Note that this also implies that the bilinear form $a$ defines an inner product, equivalent to the usual inner product of $\hh^1(\Omega)$. 

The Lax-Milgram lemma then gives the unique solvability of the weak problem in \eqref{eq:soloperator}. Thus, the solution operator is well-defined as a linear operator. In addition, the continuity of $b$ as shown in \eqref{eq:2ndboundforb} together with the coercivity of $a$ in $\hh^1(\Omega)$ imply that $T$ is bounded, with
\begin{align}
 \|T(\f)\|_{1,\Omega} \leq\, \left(\frac{c\|p\|_{\infty,\Gamma}}{\alpha}\right) \|\f\|_{0,\Gamma},\quad\forall\,\f\in\ll^2(\Gamma).\label{eq:1stboundforsoloperator}
\end{align}
We notice that, however, the operator $T$ lacks symmetry since the space $\hh^1(\Omega)$ is not included in $\ll^2(\Gamma)$. Nonetheless, we can achieve the symmetry of a different operator related to the solution operator $T$. Recall that the Rayleigh quotient in \eqref{eq:steklovrayleigh} shows that all possible Steklov-Lam\'e eigenvalues need to be non-negative and real numbers. If the solution operator is not self-adjoint then we may end up adding complex eigenvalues that are not part of the spectrum. This says that $T$ is not the most suitable operator to show the existence the Steklov-Lam\'e spectrum, and we need to modify it to be able to characterize the correct eigenpairs of this problem. This shows that even though $T$ appears to be the obvious choice for a solution operator of the Steklov-Lam\'e eigenproblem, it is not the most suitable operator to show the existence of a countable spectrum. We need instead a modification of this operator to be able to characterize the correct spectrum.

Recalling the boundedness of the bilinear form $b$ in \eqref{eq:2ndboundforb},  and the fact that $\hh^{1/2}(\Gamma)\subseteq\ll^2(\Gamma)$, we also have that
\begin{align*}
 \|(T\circ\gamma_0)(\f)\|_{1,\Omega} \leq\, \left(\frac{c^2\|p\|_{\infty,\Gamma}}{\alpha}\right) \|\f\|_{1,\Omega},\quad\forall\,\f\in\hh^1(\Omega),
\end{align*}
where the constant $c>0$ is the continuity of $\gamma_0$ in $\hh^1(\Omega)$ (cf. \eqref{eq:definitiontrace}). Therefore, the composition $T\circ\gamma_0:\hh^1(\Omega)\to\hh^1(\Omega)$ is linear and bounded. In addition, the symmetry of the bilinear forms $a$ and $b$, together with the definition of the problem in \eqref{eq:soloperator} imply that $T\circ\gamma_{0}$ is a self-adjoint operator in the $a(\,\cdot\,,\,\cdot\,)$-inner product. 

We are only left to prove that $T\circ\gamma_0$ is compact. To see this, we recall from \autoref{result:compactnesstrace} that the trace operator is compact from $\hh^1(\Omega)$ to $\ll^2(\Gamma)$. Since $T$ is a bounded and linear operator, \autoref{result:compactnesstrace} implies that $T\circ\gamma_0$ is also compact.

Now, we can easily see that $T\circ\gamma_0(\u) = \nu \u$, $\nu\neq0$, is a solution of \eqref{eq:soloperator} if and only if $\u$ is an eigenvector of \eqref{eq:steklovlame} with eigenvalue $\kappa = \frac{1}{\nu}$. 
Altogether, we conclude, thanks to the Spectral theorem for linear, bounded, self-adjoint and compact operators, that there is a sequence of eigenpairs $(w_n,\u_n)\in\rrr\times\hh^1(\Omega)$, $w_n\to +\infty$ as $n\to+\infty$, such that
\begin{align*}
    (T\circ\gamma_0)(\u_n) = \left(\frac{1}{w_n+1}\right)\u_n,\,\,\forall\,n\in\nnn.
\end{align*}
Note that if $\nu = 0$ then the above properties cannot be guaranteed. However, we see that $\nu = 0$ is contained in the spectrum of $T\circ\gamma_0$ with $N(T\circ\gamma_0)$ as its associated generalized eigenspace (of finite or infinite dimension). We summarize these properties in the following result.

\begin{theorem}
 The solution operator $T\circ\gamma_0:\hh^1(\Omega)\to\hh^1(\Omega)$ is linear, bounded, compact and self-adjoint in the $a(\,\cdot\,,\,\cdot\,)$-inner product. Its spectrum $\sigma(T\circ\gamma_0) = \{0,1\}\cup\{\nu_n:n\in\nnn\}$ is decomposed as follows
 \begin{enumerate}\label{result:steklovlame}
     \item $\nu = 1$ is an eigenvalue of $T\circ\gamma_0|_{\hh}$ with eigenspace $\rr\mm(\Omega)$;
     \item $\nu_n\in(0,1)$ is an eigenvalue of $T\circ\gamma_0|_{\hh}$ with eigenvectors lying in $\hh$;
     \item the accumulation point $\nu = 0$ belongs to $\sigma(T\circ\gamma_0)$ with $\hh^1_0(\Omega)$ as its generalized eigenspace.
 \end{enumerate}
 In addition, eigenfunctions corresponding to different eigenvalues are orthogonal with respect to $b(\cdot,\cdot)$.
\end{theorem}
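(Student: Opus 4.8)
The plan is to assemble the statement from the properties already secured for $T\circ\gamma_0$ and then to identify each piece of its spectrum explicitly. Linearity and boundedness of $T\circ\gamma_0$ were obtained from the Lax--Milgram solvability of \eqref{eq:soloperator} together with the continuity bound \eqref{eq:2ndboundforb}; self-adjointness in the $a(\cdot,\cdot)$-inner product follows from the symmetry of $a$ and $b$; and compactness follows from \autoref{result:compactnesstrace} composed with the bounded operator $T$. With these four properties in hand I would invoke the Spectral theorem for linear, bounded, compact and self-adjoint operators on the Hilbert space $\hh^1(\Omega)$ equipped with $a(\cdot,\cdot)$ to conclude that $\sigma(T\circ\gamma_0)$ is real, at most countable, and can accumulate only at $0$. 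This already yields the sequence $(w_n,\u_n)$ with $w_n\to+\infty$, hence $\nu_n=1/(w_n+1)\to 0$.

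Second, I would identify the three spectral components through the correspondence $\nu=1/\kappa=1/(w+1)$ established just before the statement. Since every admissible Steklov-Lam\'e eigenvalue satisfies $w\geq 0$ by the Rayleigh quotient \eqref{eq:steklovrayleigh}, we have $\kappa\geq 1$ and therefore $\nu\in(0,1]$. The endpoint $\nu=1$ corresponds to $w=0$: for a rigid motion $\r\in\rr\mm(\Omega)$ one has $\bsigma(\r)=\zero$, so $a(\r,\v)=(p\,\r,\v)_{0,\Gamma}=b(\gamma_0(\r),\v)$ for all $\v$, which is exactly $(T\circ\gamma_0)(\r)=\r$; conversely $(T\circ\gamma_0)(\u)=\u$ forces $(\bsigma(\u),\bepsilon(\v))_{0,\Omega}=0$ for all $\v$, and taking $\v=\u$ together with the coercivity estimate gives $\bepsilon(\u)=\zero$, i.e. $\u\in\rr\mm(\Omega)$. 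Thus the $\nu=1$ eigenspace is exactly $\rr\mm(\Omega)$, and since $\bsigma(\r)=\zero$ places $\rr\mm(\Omega)$ inside $\hh$, this is an eigenvalue of $T\circ\gamma_0|_{\hh}$. For the genuine eigenvalues $w>0$ we obtain $\nu_n\in(0,1)$, and restricting the weak form to $\v\in\hh^1_0(\Omega)$, on which $b$ vanishes, shows $(\bsigma(\u_n),\bepsilon(\v))_{0,\Omega}=0$, so $\u_n\in\hh$. For the kernel I would note that $(T\circ\gamma_0)(\u)=\zero$ is equivalent to $\gamma_0(\u)\in N(T)$, and that $T$ is injective: if $T\f=\zero$ then $b(\f,\v)=(p\,\f,\gamma_0(\v))_{0,\Gamma}=0$ for all $\v$, and the density of the range of the trace together with $p\geq p_0>0$ forces $\f=\zero$. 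Hence $(T\circ\gamma_0)(\u)=\zero$ is equivalent to $\gamma_0(\u)=\zero$, so $N(T\circ\gamma_0)=\hh^1_0(\Omega)$ is the generalized eigenspace attached to the accumulation point $\nu=0$. The decomposition \eqref{eq:spectraldecomposition} then organizes these pieces, since $T\circ\gamma_0$ maps all of $\hh^1(\Omega)$ into $\hh$ and annihilates $\hh^1_0(\Omega)$.

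For the orthogonality statement I would use the standard argument for self-adjoint operators: if $(T\circ\gamma_0)(\u_i)=\nu_i\u_i$ and $(T\circ\gamma_0)(\u_j)=\nu_j\u_j$ with $\nu_i\neq\nu_j$, then self-adjointness in the $a$-inner product gives $(\nu_i-\nu_j)\,a(\u_i,\u_j)=0$, hence $a(\u_i,\u_j)=0$; feeding this into the weak form $a(\u_i,\u_j)=\kappa_i\,b(\u_i,\u_j)$ with $\kappa_i=w_i+1\geq 1\neq 0$ yields $b(\u_i,\u_j)=0$. The main obstacle I expect is not any single hard estimate, since those were carried out earlier, but rather the careful bookkeeping of the three-way correspondence among $\nu$, $\kappa$ and $w$, and in particular verifying cleanly that the $\nu=1$ eigenspace is precisely $\rr\mm(\Omega)$, that the kernel is exactly $\hh^1_0(\Omega)$ via the injectivity of $T$, and that these identifications are compatible with the orthogonal splitting $\hh^1(\Omega)=\hh\oplus\hh^1_0(\Omega)$, so that the spectral picture on $\hh$ and the generalized eigenspace on $\hh^1_0(\Omega)$ fit together consistently.
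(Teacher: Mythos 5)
Your proof is correct and follows the same skeleton as the paper's: both assemble linearity, boundedness, compactness and $a$-self-adjointness from the results preceding the theorem, invoke the Spectral theorem, and organize the spectrum through the decomposition $\hh^1(\Omega)=\hh\oplus\hh^1_0(\Omega)$ of \autoref{result:decompostion}. The difference is in how much is actually verified. The paper's proof of parts (1) and (2) is a one-line citation of the Spectral theorem and \autoref{result:decompostion}, and its proof of part (3) uses the bound \eqref{eq:1stboundforsoloperator} only to show the inclusion $\hh^1_0(\Omega)\subseteq N(T\circ\gamma_0)$. You do more: you verify both inclusions in the identification of the $\nu=1$ eigenspace with $\rr\mm(\Omega)$ (via $\bsigma(\r)=\zero$ for one direction, and coercivity of $(\bsigma(\cdot),\bepsilon(\cdot))_{0,\Omega}$ over strains for the other), you show that eigenvectors for $\nu\in(0,1)$ lie in $\hh$ by testing against $\hh^1_0(\Omega)$, and --- the genuinely different step --- you prove the equality $N(T\circ\gamma_0)=\hh^1_0(\Omega)$ by establishing that $T$ itself is injective, using the density of the trace range in $\ll^2(\Gamma)$ together with $p\geq p_0>0$. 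This is sharper than the paper's argument: containment alone does not rule out kernel elements outside $\hh^1_0(\Omega)$, so your injectivity step is what actually justifies calling $\hh^1_0(\Omega)$ the full generalized eigenspace at $\nu=0$. You also supply the $b$-orthogonality argument, which the paper's proof omits entirely; your computation covers distinct nonzero eigenvalues, and the remaining case $\nu_j=0$ is immediate since then $\gamma_0(\u_j)=\zero$ and hence $b(\u_i,\u_j)=0$ directly.
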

\begin{proof}
 (1) and (2) of the theorem are direct applications of the Spectral theorem and \autoref{result:decompostion}. For (3), let us recall that $0$ is an eigenvalue of a linear and bounded operator if and only if the corresponding eigenspace is a subset of the kernel of the operator. Then, from \eqref{eq:1stboundforsoloperator} we 
 see that $(T\circ\gamma_0)(\w) = \zero$ as long as $\w$ belongs to $\hh^1_0(\Omega)$. Thus, $\nu = 0$ is an eigenvalue of $T\circ\gamma_0$ with eigenspace $\hh^1_0(\Omega)$.
\end{proof}

This result gives us the following spectral characterization for the spectrum of the Steklov-Lam\'e eigenproblem in \eqref{eq:steklovlame}.
\begin{theorem}\label{result:steklovspectrum}
 The spectrum of the Steklov-Lam\'e eigenproblem in \eqref{eq:steklovlame} is $\{w_n:n\in\nnn\}\cup\{0\}$ and it is decomposed as follows
 \begin{enumerate}
     \item $w = 0$ is an eigenvalue of \eqref{eq:steklovlame} with associated eigenfunctions lying in $\rr\mm(\Omega)$;
     \item $w_n\in(0,+\infty)$ is an eigenvalue of \eqref{eq:steklovlame} with associated eigenfunctions belonging to $\hh$.
 \end{enumerate}
\end{theorem}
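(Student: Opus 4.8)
The plan is to translate the spectral decomposition of the solution operator $T\circ\gamma_0$ obtained in \autoref{result:steklovlame} back into statements about the eigenproblem \eqref{eq:steklovlame}, using the equivalence established just above: for $\nu\neq 0$, one has $(T\circ\gamma_0)(\u) = \nu\,\u$ if and only if $\u$ is a weak solution of \eqref{eq:steklovlame} with $\kappa = 1/\nu$, where $\kappa = w+1$. Consequently the map $\nu\mapsto w = \frac{1}{\nu}-1 = \frac{1-\nu}{\nu}$ carries the nonzero eigenvalues of $T\circ\gamma_0$ bijectively onto the Steklov-Lam\'e eigenvalues while preserving the associated eigenspaces, and the whole proof is essentially this relabeling applied case by case.

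First I would treat the eigenvalue $\nu = 1$ of $T\circ\gamma_0$. Under $\nu\mapsto w$ it maps to $w = 0$, and the eigenspace $\rr\mm(\Omega)$ is carried over unchanged, yielding item (1). This is consistent with the earlier observation (via the divergence theorem) that $w=0$ is always a Steklov-Lam\'e eigenvalue with eigenfunctions in $\rr\mm(\Omega)$; I would also note $\rr\mm(\Omega)\subseteq\hh$, since rigid motions are strain-free and hence satisfy $\bsigma(\cdot)=\zero$, so these eigenfunctions indeed lie in $\hh$ as claimed.

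Next I would handle the eigenvalues $\nu_n\in(0,1)$. Because $\nu\mapsto \frac{1-\nu}{\nu}$ is a strictly decreasing bijection from $(0,1)$ onto $(0,+\infty)$, each $\nu_n$ produces a positive eigenvalue $w_n\in(0,+\infty)$ whose eigenfunctions lie in $\hh$, exactly matching \autoref{result:steklovlame}, establishing item (2). Since the Spectral theorem forces $\nu_n\to 0^+$, the relation $w_n = \frac{1-\nu_n}{\nu_n}$ gives $w_n\to+\infty$, which simultaneously yields the countability of the spectrum and its single accumulation point at infinity.

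Finally I would dispose of the value $\nu = 0$, and this is the step requiring the most care. The equivalence above is valid only for $\nu\neq 0$, so the accumulation point $\nu=0$ of $\sigma(T\circ\gamma_0)$ contributes no Steklov-Lam\'e eigenvalue: its generalized eigenspace $\hh^1_0(\Omega)$ consists of fields with vanishing trace, for which $b(\u,\v)=0$ for every $\v\in\hh^1(\Omega)$ and the Robin condition \eqref{eq:steklovlame2} is vacuous. The main obstacle is precisely ensuring that this genuine piece of $\sigma(T\circ\gamma_0)$ does not spuriously enter the Steklov-Lam\'e spectrum; this follows from the strict requirement $\nu\neq 0$ together with the splitting $\hh^1(\Omega)=\hh\oplus\hh^1_0(\Omega)$ of \autoref{result:decompostion}, which cleanly separates the true eigenfunctions (in $\hh$) from the trace-free fields. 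Collecting the three cases then shows the spectrum of \eqref{eq:steklovlame} equals $\{w_n:n\in\nnn\}\cup\{0\}$ with the stated decomposition.
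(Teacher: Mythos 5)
Your proof is correct and takes essentially the same approach as the paper: there, Theorem~\ref{result:steklovspectrum} is stated as an immediate corollary of the spectral decomposition of $T\circ\gamma_0$, obtained by the same relabeling $\nu\mapsto w=\frac{1}{\nu}-1$ that you carry out case by case. Your explicit treatment of $\nu=0$ (no Steklov--Lam\'e eigenvalue arises, since trace-free fields satisfy $b(\u,\v)=0$ and the ellipticity of $a$ rules them out as eigenfunctions) is precisely the point the paper settles earlier via the decomposition $\hh^1(\Omega)=\hh\oplus\hh^1_0(\Omega)$ and leaves implicit here.
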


In the theory of elasticity one usually finds a more general Robin boundary condition than the one considered in \eqref{eq:steklovlame2}. Let us assume that $\mm\in\lll^\infty(\Gamma)$ is a given symmetric matrix.
Let us further assume that there exists a constant $m>0$ such that the following lower bound holds
\begin{align}
    (\mm\u,\u)_{0,\Gamma}\geq m\|\u\|_{0,\Gamma}^2,\,\,\forall\,\u\in\ll^2(\Gamma).\label{eq:positivedefinite}
\end{align}
Let us consider the following eigenproblem: find eigenpairs $\u\in\hh^1(\Omega)$ and $w\in\ccc$ such that
\begin{align}
  -\bdiv\,\bsigma(\u) = \zero\quad\text{in $\Omega$},\quad
  \bsigma(\u)\n = w\mm\u\quad\text{on $\Gamma$}.\label{eq:gen-steklovlame}
 \end{align}
This form of boundary condition is considered in the study of elasticity since it allows constraints on specific directions of the displacement represented by $\u$, and is sometimes referred to as a {\it gliding} boundary condition.

We see that a similar weak formulation can be obtained for the eigenproblem in \eqref{eq:gen-steklovlame}; we simply replace the inner product $(p\,\u,\v)_{0,\Gamma}$ by $(\mm\,\u,\v)_{0,\Gamma}$. The main difference comes in the upper bounds for the continuity of the bilinear forms (equivalently their induced operators) and the ellipticity constant $\alpha$. For the problem in \eqref{eq:gen-steklovlame}, these bounds are
\begin{align*}
    |\,a(\u,\v)|\leq&\, \big(c^2\max\big\{\lambda+2\mu,\|\mm\|_{\infty,\Gamma}\big\}\big) \|\u\|_{1,\Omega}\|\v\|_{1,\Omega},\\
    |\,b(\u,\v)|\leq&\, \big(c^2\|\mm\|_{\infty,\Gamma}\big) \|\u\|_{1,\Omega}\|\v\|_{1,\Omega},
\end{align*}
and $\alpha =\frac{1}{2C^2}\min\left\{m,2\mu,d\left(\lambda +\left(\frac{2}{d}\right)\mu\right)\right\}$. Thus, since all the necessary conditions for the corresponding solution operator to be well defined, linear, bounded and compact are in place, the results given in \autoref{result:steklovlame} and \autoref{result:steklovspectrum} are true for this case as well.

In the forthcoming section we discuss a conforming finite element scheme to approximate Steklov-Lam\'e eigenpairs of \eqref{eq:steklovlame}. We also follow the theory developed in \cite{ref:babuskaosborn1991} to provide the spectral approximation of this scheme.

\section{Discrete formulation and numerical results}\label{section:discrete}
Let $\Omega$ be a bounded Lipschitz polygonal domain in $\rrr^d$, $d\geq2$. 
Let $\tttt_h$ be a triangulation (by triangles in 2D or tetrahedra in 3D) of $\overline{\Omega}$, with mesh size $h>0$, that is 
$h:=\max\{h_T:T\in\tttt_h\}$, and $h_T$ denoting the diameter of the triangle $T$ in the triangulation. Let $\hh_h$ be a finite dimensional subspace of $\hh^1(\Omega)$.

Let us consider the following discrete formulation of \eqref{eq:weakform}: find $\u_h\in\hh_h$ and $w_h\in\rrr$ such that
\begin{align}
 a(\u_h,\v_h) = \kappa_h\, b(\u_h,\v_h),\,\,\forall\,\v_h\in\hh_h,\label{eq:discrete-form}
\end{align}
where $\kappa_h := w_h+1$, and the bilinear forms $a$ and $b$ are defined as in \ref{section:steklovlame}. In this case, the fact that $\hh_h\subseteq\hh^1(\Omega)$ implies that the bilinear forms $a$ and $b$ are bounded in $\hh_h$. In addition, as shown in the previous section, we can conclude that the bilinear form $a$ is also elliptic on $\hh_h$ with the same ellipticity constant $\alpha>0$ (cf. \eqref{eq:ellipticconstant}). As for the solution operator $T\circ\gamma_0$, we define the solution operator $T_h\circ\gamma_0:\hh^1(\Omega)\to\hh^1(\Omega)$ as $(T_h\circ\gamma_0)(\f) = \u_h$, where $\u_h\in\hh_h$ and $\f\in\hh^1(\Omega)$ satisfy the source problem
\begin{align}
    a(\u_h,\v_h) = b(\f,\v_h),\,\,\forall\,\v_h\in\hh_h.\label{eq:discretesource}
\end{align}
Then the $\hh_h$-ellipticity of $a$ provides the uniqueness of solution in $\hh_h$ of the problem above. This also implies that the discrete solution operator $T_h\circ\gamma_0$ in well-defined as a linear and bounded operator.  
The compactness of $T_h\circ\gamma_0$ is guaranteed since its range is contained in the finite dimensional subspace $\hh_h$. As for the solution operator defined in \eqref{eq:soloperator}, $T_h\circ\gamma_0$ is also self-adjoint.

We see that $(T_h\circ\gamma_0)(\u_h) = \nu_h\u_h$, $\nu_h\neq0$, is a solution of the source problem in \eqref{eq:discretesource} if and only if $\u_h\in\hh_h$ solves \eqref{eq:discrete-form} with eigenvalue satisfying $ \kappa_h = \frac{1}{\nu_h}$. We summarize these properties in the next result.
\begin{theorem}
The spectrum of $T_h\circ\gamma_0$, $\sigma(T_h\circ\gamma_0)$ is decomposed as $\{0\}\cup\{\nu_{h,n}:\,n = 1,\ldots,N_h\}\cup\{1\}$, with $N_h:=\dim(\hh\cap\hh_h)$. In this case we have that
\begin{enumerate}
    \item $\nu_h = 1$ is an eigenvalue of $T_h\circ\gamma_0|_{\hh_h}$ with $\rr\mm(\Omega)\cap\hh_h$ as the associated eigenspace;
    \item $\nu_{h,n}\in(0,1)$, $n=1,\ldots,N_h$ are eigenvalues of $T_h\circ\gamma_0|_{\hh_h}$ with eigenfunctions lying in $\hh\cap\hh_h$;
    \item $\nu_h = 0$ belongs to the spectrum of $T_h\circ\gamma_0$ with $\hh^1_0(\Omega)\,\cap\,\hh_h$ as its associated generalized eigenspace.
\end{enumerate}
In addition, eigenfunctions corresponding to different eigenvalues are orthogonal with respect to $b(\cdot,\cdot)$.
\end{theorem}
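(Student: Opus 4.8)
The plan is to mirror the proof of \autoref{result:steklovlame}, with the simplification that $\hh_h$ is finite-dimensional so that compactness is automatic and the spectral decomposition becomes purely linear-algebraic. First I would record what has already been established just above: $T_h\circ\gamma_0$ maps $\hh^1(\Omega)$ into $\hh_h$, it annihilates $\hh^1_0(\Omega)$ (whose traces vanish), and its restriction $T_h\circ\gamma_0|_{\hh_h}$ is self-adjoint for the $a(\cdot,\cdot)$-inner product. Since $\dim\hh_h<\infty$, the spectral theorem for self-adjoint operators on a finite-dimensional inner-product space yields a real spectrum together with an $a$-orthogonal eigenbasis of $\hh_h$; every eigenvector belonging to a nonzero eigenvalue lies in $\hh_h$, so it suffices to analyse the restriction.

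Second, I would locate the eigenvalues through the Rayleigh quotient. For an eigenpair $(\nu_h,\u_h)$ with $\u_h\in\hh_h\setminus\{\zero\}$, the identity $\nu_h\,a(\u_h,\v_h)=b(\u_h,\v_h)$ tested at $\v_h=\u_h$ gives $\nu_h=b(\u_h,\u_h)/a(\u_h,\u_h)$. The $\hh^1$-ellipticity of $a$ (cf.\ \eqref{eq:ellipticconstant}) makes the denominator strictly positive, while $b(\u_h,\u_h)=(p\u_h,\u_h)_{0,\Gamma}\ge 0$ and $a(\u_h,\u_h)-b(\u_h,\u_h)=(\bsigma(\u_h),\bepsilon(\u_h))_{0,\Omega}\ge 0$; hence $\nu_h\in[0,1]$. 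I then characterize the endpoints. The value $\nu_h=0$ occurs exactly when $b(\u_h,\u_h)=0$, i.e.\ $\gamma_0(\u_h)=\zero$, so the kernel of the restriction is $\hh^1_0(\Omega)\cap\hh_h$, giving item (3). The value $\nu_h=1$ occurs exactly when $(\bsigma(\u_h),\bepsilon(\u_h))_{0,\Omega}=0$, and the bound $\min\{2\mu,d(\lambda+(2/d)\mu)\}\|\bepsilon(\u_h)\|_0^2$, positive by \eqref{eq:lame-parameters}, forces $\bepsilon(\u_h)=\zero$, i.e.\ $\u_h\in\rr\mm(\Omega)\cap\hh_h$; conversely every such rigid motion is a fixed point since $\bsigma(\u_h)=\zero$ makes $a(\u_h,\cdot)=b(\u_h,\cdot)$. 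This yields item (1), and the remaining eigenvalues necessarily lie in $(0,1)$, which is item (2).

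Third, to identify the eigenspaces and obtain the count $N_h$, I would prove the discrete counterpart of \autoref{result:decompostion}, namely $\hh_h=(\hh\cap\hh_h)\oplus(\hh^1_0(\Omega)\cap\hh_h)$. Mutual $a$-orthogonality of the three spectral blocks is automatic from self-adjointness; the content is to show that an eigenvector with $\nu_h\in(0,1)$ lies in $\hh$. Testing its eigen-relation against $\v_h\in\hh^1_0(\Omega)\cap\hh_h$ and using $b(\u_h,\v_h)=0$ yields $(\bsigma(\u_h),\bepsilon(\v_h))_{0,\Omega}=0$ for all such $\v_h$. For the decomposition itself, given $\u_h\in\hh_h$ I would invoke Lax--Milgram on the finite-dimensional subspace $\hh^1_0(\Omega)\cap\hh_h$, coercive there by Korn's first inequality \eqref{eq:introfirstkorn}, to solve $(\bsigma(\z_h),\bepsilon(\v_h))_{0,\Omega}=(\bsigma(\u_h),\bepsilon(\v_h))_{0,\Omega}$ for all $\v_h\in\hh^1_0(\Omega)\cap\hh_h$, and take the complementary part $\u_h-\z_h$; the trivial intersection $\hh\cap\hh^1_0(\Omega)=\{\zero\}$ from \autoref{result:decompostion} secures the direct sum. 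Dimension counting in this decomposition then accounts for the eigenvectors with $\nu_h\in(0,1]$ and, once the complement is identified with $\hh\cap\hh_h$, yields the count $N_h=\dim(\hh\cap\hh_h)$, the value $1$ being attained on the rigid-motion part.

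Finally, $b$-orthogonality across distinct eigenvalues is immediate: for eigenvectors $\u_h,\w_h$ with $\nu_h\neq\mu_h$, symmetry of $a$ and $b$ gives $\nu_h\,a(\u_h,\w_h)=b(\u_h,\w_h)=\mu_h\,a(\u_h,\w_h)$, forcing $a(\u_h,\w_h)=0$ and hence $b(\u_h,\w_h)=0$; for the kernel the relation $b(\u_h,\v_h)=0$ is direct. I expect the genuine obstacle to be the third step: the natural Galerkin complement of $\hh^1_0(\Omega)\cap\hh_h$ is the \emph{discretely} elastically-harmonic subspace of $\hh_h$, consisting of fields whose stress is $(\bsigma(\cdot),\bepsilon(\cdot))_{0,\Omega}$-orthogonal only to $\hh^1_0(\Omega)\cap\hh_h$, which in general is strictly larger than $\hh\cap\hh_h$. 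Reconciling the eigenvector characterization and the count with the exact space $\hh\cap\hh_h$ named in the statement is the delicate point that must be argued with care.
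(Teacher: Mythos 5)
Your first, second, and fourth steps are correct, and they in fact supply more detail than the paper's own proof, which disposes of items (1) and (2) with the single assertion that they ``follow directly from the $\hh^1$-ellipticity of the bilinear form $a$'', and proves item (3) exactly as you do: testing against the eigenfunction itself and using $p\geq p_0>0$ to force $\gamma_0(\u_h)=\zero$, so that the kernel of the restriction is $\hh^1_0(\Omega)\cap\hh_h$. Your Rayleigh-quotient localization $\nu_h=b(\u_h,\u_h)/a(\u_h,\u_h)\in[0,1]$, the endpoint characterizations via $a(\u_h,\u_h)-b(\u_h,\u_h)=(\bsigma(\u_h),\bepsilon(\u_h))_{0,\Omega}$, and the $b$-orthogonality argument are all sound and consistent with the paper's much terser treatment.

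The obstacle you flag in your third step is a genuine gap, and you are right to suspect it cannot be closed as stated; what you could not know is that the paper never addresses it either, so the defect lies in the theorem statement rather than in your argument. As you observe, an eigenfunction $\u_h$ with $\nu_h\in(0,1)$ satisfies $(\bsigma(\u_h),\bepsilon(\v_h))_{0,\Omega}=0$ only for discrete test functions $\v_h\in\hh^1_0(\Omega)\cap\hh_h$, whereas membership in $\hh$ demands this for every $\v\in\hh^1_0(\Omega)$. A dimension count shows the two conditions genuinely differ: a field in $\hh\cap\hh_h$ is a weak solution of the homogeneous Lam\'e system, hence analytic in the interior of $\Omega$ by elliptic regularity, and an interior-analytic piecewise polynomial (say $\pp_1$) must be globally affine, so $\dim(\hh\cap\hh_h)\leq d+d^2$ uniformly in $h$; on the other hand, by self-adjointness the eigenvectors associated with eigenvalues in $(0,1]$ span the $a$-orthogonal complement of $\hh^1_0(\Omega)\cap\hh_h$ in $\hh_h$, whose dimension is essentially the number of boundary degrees of freedom and grows without bound as $h\to0$. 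Consequently item (2) and the count $N_h=\dim(\hh\cap\hh_h)$ are false as written; they become true, and your proof closes completely, once $\hh\cap\hh_h$ is replaced throughout by the discretely elastically-harmonic space $\big\{\u_h\in\hh_h:\,(\bsigma(\u_h),\bepsilon(\v_h))_{0,\Omega}=0\ \ \forall\,\v_h\in\hh^1_0(\Omega)\cap\hh_h\big\}$, which is exactly the Galerkin complement you describe and contains $\rr\mm(\Omega)\cap\hh_h$ as the $\nu_h=1$ eigenspace. Your proposal is the only one of the two arguments that notices this, and the repair it suggests is the correct one.
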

\begin{proof}
 (1) and (2) follow directly from the $\hh^1$-ellipticity of the bilinear form $a$. For (3) we see that $\nu_h = 0$ is an eigenvalue of $T_h$ if and only if its associated eigenfunction $\u_h\in\hh_h$ satisfies
 \begin{align}
     \int_{\Gamma} p\,\u_h\cdot\v_h = 0,\quad\forall\,\v_h\in\hh_h.
 \end{align}
Because $p\in L^\infty(\Gamma)$ is bounded below by $p_0$ (cf. \eqref{eq:lowerboundrho}), we obtain that $\gamma_0(\u_h) = \zero$ on $\Gamma$. Then, for $\nu_h = 0$ to be an eigenvalue of $T_h$ one needs to require that the eigenfunctions lie in $\hh^1_0(\Omega)\cap\hh_h$.
\end{proof}
In the subsequent section we establish the spectral approximation properties of the discrete scheme in \eqref{eq:discrete-form}.

\subsection{Spectral approximation}\label{section:approx}
We follow the approach presented in \cite{ref:babuskaosborn1991} to show that the proposed conforming Galerkin scheme in \eqref{eq:discrete-form} provides convergent approximation to the true eigenvalues of \eqref{eq:weakform}, and does not add spurious eigenmodes to the spectrum of $T_h\circ\gamma_0$. Hereafter we assume that for every $\u\in\hh^1(\Omega)$, the following approximation property regarding the space $\hh_h$ holds
\begin{align}
    \lim_{h\to0}\inf_{\v_h\in\hh_h}\|\u-\v_h\|_{1,\Omega} = 0.\label{eq:approx}
\end{align}
The first property we need to check is the convergence of $T_h\circ\gamma_0$ to $T\circ\gamma_0$ in operator norm. We note that the definition of these operators, together with the fact that $a$ is coercive in $\hh^1(\Omega)$, implies that $T_h\circ\gamma_0 = P_h\circ T\circ\gamma_0$, where $P_h:\hh^1(\Omega)\to \hh_h$ is the orthogonal projection with respect to the inner product $a(\cdot,\cdot)$. The approximation property in \eqref{eq:approx} implies that $P_h$ converges to the identity mapping in operator norm. Then, the compactness of $T\circ\gamma_0$, the continuity of $P_h$ and its pointwise convergence to the identity mapping in $\hh^1(\Omega)$ imply that $T_h\circ\gamma_0$ converges to $T\circ\gamma_0$ in norm, that is $\|T_h\circ\gamma_0-T\circ\gamma_0\|\to 0$ as $h\to 0$, where
\begin{align*}
    \|T_h\circ\gamma_0-T\circ\gamma_0\|:= \sup_{\v\in\hh^1(\Omega)}\frac{\|T_h\circ\gamma_0(\v)-T\circ\gamma_0(\v)\|_{1,\Omega}}{\|\v\|_{1,\Omega}}.
\end{align*}
Norm of the operator $T_h\circ\gamma_0$ implies that any conforming scheme of the form given in \eqref{eq:discrete-form} does not add any spurious eigenvalues, see \cite{ref:babuskaosborn1991}, \cite[Section 5.]{ref:meddahi2013} or \cite[Section 1. and Section 2.]{ref:descloux1978-1} for a more detailed discussion.

If $(\kappa,\u)$ and $(\tilde\kappa,\tilde\u)$ are two distinct eigenpairs of \eqref{eq:weakform}, the following identity holds (see \cite[Lemma 9.1]{ref:babuskaosborn1991})
\begin{align*}
    (\tilde\kappa-\kappa)\,b(\tilde\u,\tilde\u) = a(\u-\tilde\u,\u-\tilde\u) - \kappa\,b(\u-\tilde\u,\u-\tilde\u).
\end{align*}
Then since the the discrete formulation and the finite element space $\hh_h$ are conforming, for eigenpairs $(\kappa,\u)$ and $(\kappa_h,\u_h)$ of \eqref{eq:weakform} and \eqref{eq:discrete-form} respectively and such that $b(\u_h,\u_h) = 1$, the continuity of the bilinear forms $a$ and $b$ give the following bound for the eigenvalues $\kappa$ and $\kappa_h$
\begin{align*}
    |\kappa-\kappa_h|\leq \big(\max\{\lambda+2\mu,c^2\|p\|_{\infty,\Gamma}\} + \kappa c^2\|p\|_{\infty,\Gamma}\big)\,\|\u-\u_h\|_{1,\Omega}^2.
\end{align*}
On the other hand, the error $\|\u-\u_h\|_{1,\Omega}$ can be bounded as follows 
\begin{align}
    \|\u-\u_h\|_{1,\Omega}\leq \left(\frac{C}{\alpha}\right)\,\delta(\ee(\kappa),\hh_h),\label{eq:ewapprox}
\end{align}
for some constant $C>0$, independent of $h$. Here $\ee(\kappa)$ is the eigenspace corresponding to the eigenvalue $\kappa$, and $\delta(\cdot,\cdot)$ is the gap between two spaces, and is defined as
\begin{align}
    \delta(\ee(\kappa),\hh_h) := \sup_{\substack{\u\in\ee(\kappa)\\\|\u\|_{1,\Omega}=1}}\inf_{\v_h\in\hh_h}\|\u-\v_h\|_{1,\Omega}.\label{eq:gapdefinition}
\end{align}
Putting back the estimate for $\|\u-\u_h\|_{1,\Omega}$ into the estimate for $|\kappa-\kappa_h|$, we obtain the following estimate for the eigenvalues:
\begin{align}
    |\kappa-\kappa_h|\leq\left(\frac{C}{\alpha}\right)^2\big(\max\{\lambda+2\mu,c^2\|p\|_{\infty,\Gamma}\} + c^2(\kappa+1)\|p\|_{\infty,\Gamma}\big)\delta(\ee(\kappa),\hh_h)^2.\label{eq:evapprox}
\end{align}
The characterization given by the Spectral theorem of the point spectrum of the Steklov-Lam\'e eigenproblem implies that for a Steklov-Lam\'e eigenvalue $\kappa>0$, its associated eigenspace $\ee(\kappa)$ is of finite dimension. Thus, the assumption in \eqref{eq:approx} implies that $\delta(\ee(\kappa),\hh_h)\to0$ as $h\to0$. Therefore, \eqref{eq:ewapprox} provides the convergence of $\u_h$ to the eigenfunction $\u$ in the $\hh^1$-norm, as well as the convergence of $\kappa_h$ to  the eigenvalue $\kappa$ as a consequence of \eqref{eq:evapprox}. We summarize the convergence properties of the discrete formulation in \eqref{eq:discrete-form} in the following result.
\begin{theorem}
Let $(\kappa,\u)\in\rrr\times\hh^1(\Omega)$ and $(\kappa_h,\u_h)\in\rrr\times\hh_h$ be an eigenpair of \eqref{eq:weakform} and \eqref{eq:discrete-form} respectively. Under the assumptions listed above, there exist positive constants $C_{\rm ew}$ and $C_{\rm ev}$, depending only on $c,C, \alpha, \lambda, \mu,\kappa$ and $\|p\|_{\infty,\Gamma}$, such that
\begin{align}
    \|\u-\u_h\|_{1,\Omega}\leq\, C_{\rm ew}\delta(\ee(\kappa),\hh_h),
    \quad
    |\kappa-\kappa_h|\leq\, C_{\rm ev}\delta(\ee(\kappa),\hh_h)^2,\label{eq:epapprox}
\end{align}
where the gap $\delta(\cdot,\cdot)$ is defined as in \eqref{eq:gapdefinition}. In addition, the assumption in \eqref{eq:approx} guarantees that $\delta(\ee(\kappa),\hh_h)\to0$ as $h\to0$.
\end{theorem}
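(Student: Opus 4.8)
The plan is to deduce the two estimates in \eqref{eq:epapprox} from the abstract spectral approximation theory of \cite{ref:babuskaosborn1991}, specialized to the self-adjoint and compact solution operators $T\circ\gamma_0$ and $T_h\circ\gamma_0$ constructed above. The indispensable prerequisite, already secured in the discussion preceding the statement, is the operator-norm convergence $\|T_h\circ\gamma_0-T\circ\gamma_0\|\to 0$ as $h\to 0$; this follows from the factorization $T_h\circ\gamma_0 = P_h\circ(T\circ\gamma_0)$, the compactness of $T\circ\gamma_0$, and the pointwise convergence $P_h\to I$ furnished by the approximation property \eqref{eq:approx}. It is precisely this norm convergence that prevents pollution of the spectrum by spurious modes and guarantees that each positive eigenvalue $\kappa$ is approximated by genuine discrete eigenvalues $\kappa_h$.

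First I would establish the eigenfunction estimate. Since $\kappa>0$ is an eigenvalue of a self-adjoint compact operator, the eigenspace $\ee(\kappa)$ is finite dimensional, and the Riesz spectral projection of $T_h\circ\gamma_0$ onto the invariant subspace near $\nu=1/\kappa$ converges in norm to that of $T\circ\gamma_0$. The standard argument of \cite{ref:babuskaosborn1991} then delivers \eqref{eq:ewapprox}, that is $\|\u-\u_h\|_{1,\Omega}\leq (C/\alpha)\,\delta(\ee(\kappa),\hh_h)$, the constant $C$ reflecting the continuity of $a$ and $b$ and $\alpha$ being the ellipticity constant in \eqref{eq:ellipticconstant}.

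Next I would obtain the quadratic eigenvalue estimate by exploiting self-adjointness through the Babu\v{s}ka--Osborn identity \cite[Lemma 9.1]{ref:babuskaosborn1991}. Applied to the conforming pair, with $\u_h$ normalized so that $b(\u_h,\u_h)=1$, it reads
\begin{align*}
    (\kappa_h-\kappa)\,b(\u_h,\u_h) = a(\u-\u_h,\u-\u_h) - \kappa\,b(\u-\u_h,\u-\u_h).
\end{align*}
Taking absolute values and inserting the continuity bounds \eqref{eq:boundsforab} for $a$ and $b$ gives $|\kappa-\kappa_h|\leq (\max\{\lambda+2\mu,c^2\|p\|_{\infty,\Gamma}\}+\kappa c^2\|p\|_{\infty,\Gamma})\,\|\u-\u_h\|_{1,\Omega}^2$; substituting the eigenfunction estimate from the previous paragraph yields the quadratic bound \eqref{eq:evapprox}, with $C_{\rm ev}$ depending only on $c$, $C$, $\alpha$, $\lambda$, $\mu$, $\kappa$ and $\|p\|_{\infty,\Gamma}$. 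The final assertion $\delta(\ee(\kappa),\hh_h)\to 0$ is then read off from \eqref{eq:approx}: since $\ee(\kappa)$ is finite dimensional, its unit sphere is compact, and the supremum in \eqref{eq:gapdefinition} is attained, so the convergence of the innermost infimum for each fixed $\u$ upgrades to uniform convergence.

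I expect the principal obstacle to lie in justifying the spectral-projection convergence underlying \eqref{eq:ewapprox}: converting the operator-norm convergence $\|T_h\circ\gamma_0-T\circ\gamma_0\|\to 0$ into convergence of the associated invariant subspaces requires the contour-integral (Riesz) representation of the spectral projections together with a separation of the target eigenvalue $1/\kappa$ from the rest of $\sigma(T\circ\gamma_0)$ that is uniform in $h$. Once this abstract ingredient is in place, the remaining bilinear-form manipulations are routine consequences of continuity and ellipticity.
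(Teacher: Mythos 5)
Your proposal is correct and follows essentially the same route as the paper: both arguments rest on the operator-norm convergence of $T_h\circ\gamma_0 = P_h\circ T\circ\gamma_0$ to $T\circ\gamma_0$ (compactness plus pointwise convergence of $P_h$), invoke the Babu\v{s}ka--Osborn theory \cite{ref:babuskaosborn1991} for the eigenfunction bound \eqref{eq:ewapprox}, apply the identity of \cite[Lemma 9.1]{ref:babuskaosborn1991} with the normalization $b(\u_h,\u_h)=1$ and the continuity bounds \eqref{eq:boundsforab} to obtain the quadratic eigenvalue estimate \eqref{eq:evapprox}, and use finite-dimensionality of $\ee(\kappa)$ together with \eqref{eq:approx} to conclude $\delta(\ee(\kappa),\hh_h)\to 0$. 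The only difference is expository: you make explicit the Riesz spectral-projection mechanism that the paper leaves implicit in its citation of the abstract theory.
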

We next present some numerical examples to test the theoretical results we showed in the previous sections.

\subsection{Numerical examples}\label{section:numerics}
For all the examples in this section, we utilized Lagrange finite elements of degree $k\geq1$ to approximate the Steklov-Lam\'e eigenpairs. We recall that these finite element spaces have the following interpolation error estimate:
 \begin{align*}
     \|\u-\ii_h\u\|_{s,\Omega} \leq \tilde{C}h^{\min\{k,t\}+1-s}|\u|_{t+1,\Omega},\,\,\forall\,\u\in\hh^{t+1}(\Omega),
 \end{align*}
where the constant $\tilde C>0$ is independent of the meshsize $h$ for shape regular triangulations \cite{ref:girault1986}, $s\geq t>0$, and $|\cdot|_{t+1,\Omega}$ denotes the standard semi-norm in $\hh^t(\Omega)$. We note that this estimate guarantees that the approximation condition in \eqref{eq:approx} is met.

Now let $(\kappa,\u)$ be a Steklov-Lam\'e eigenpair of \eqref{eq:weakform}. Then $\u\in\ee(\kappa)$ and if $\|\u\|_{1,\Omega} = 1$, the interpolation estimate of the Lagrange elements given above with $s=1$ gives
\begin{align*}
    \inf_{\v_h\in\hh_h}\|\u-\v_h\|_{1,\Omega} \leq&\, \|\u-\ii_h\u\|_{1,\Omega}\\
    \leq&\, \tilde{C}h^{\min\{k,t\}}|\u|_{t+1,\Omega}\\
    \leq&\, \tilde{C}h^{\min\{k,t\}}\|\u\|_{1,\Omega}\\
    =&\, \tilde{C}h^{\min\{k,t\}},
\end{align*}
where the last inequality follows from the inequalities $|\u|_{t+1,\Omega}\leq |\u|_{1,\Omega} \leq \|\u\|_{1,\Omega}$. Taking the supremum over $\u\in\ee(\kappa)$ with $\|\u\|_{1,\Omega}$ = 1, we obtain
\begin{align*}
    \delta(\ee(\kappa),\hh_h)\leq \tilde{C}h^{\min\{k,t\}}.
\end{align*}
Using the estimates in \eqref{eq:ewapprox} and \eqref{eq:evapprox} we have the following convergence of the Steklov-Lam\'e eigenpairs
\begin{align}
\|\u-\u_h\|_{1,\Omega}\leq&\, \left(\frac{C\tilde{C}}{\alpha}\right)h^{\min\{k,t\}},\label{eq:cvrateews}\\
    |\kappa-\kappa_h|\leq&\, \left(\frac{C\tilde{C}}{\alpha}\right)^2\big(\max\{\lambda+2\mu,c^2\|p\|_{\infty,\Gamma}\} + (\kappa+1)c^2\|p\|_{\infty,\Gamma}\big)\,h^{2\min\{k,t\}}.\label{eq:cvrateevs}
\end{align}
In all experiments we have used $\pp_1$-conforming elements to compute the approximated eigenpairs on a sequence of regular (not necessarily uniform) meshes. The reference solution was computed with $\pp_1$-conforming elements on a very fine grid. These experiments were implemented in FreeFem++ \cite{ref:freefem}. 

We recall that the rate of convergence of the discrete scheme (cf. \eqref{eq:cvrateews} and \eqref{eq:cvrateevs}) depends entirely on the regularity of the true Steklov-Lam\'e eigenfunctions, and the degree of the local polynomials we choose for our discretization. In fact,
note that the source problem in \eqref{eq:soloperator} is equivalent to the problem of finding $\u\in\hh^1(\Omega)$ such that 
\begin{align*}
    -\bdiv\,\bsigma(\u) = \zero\quad\text{in $\Omega$},\quad \bsigma(\u)\n = \f,\quad\text{on $\Gamma$},
\end{align*}
for a given $\f\in\ll^2(\Omega)$.
Since the boundary condition of the problem above is of Neumann type,
in the presence of corners or edges on the boundary, the Steklov-Lam\'e eigenfunctions belong to $\hh^{1+s}(\Omega)$ for all $s\in(0,r_1]$, where $r_1$ is the first positive root of the following nonlinear equation \cite{ref:grisvard1989,ref:nicaise1992}
\begin{align}
    r^2\sin^2(\theta) = \sin^2(r\cdot\theta),\quad r\in\rrr,\label{eq:regularity-eqs}
\end{align}
with $\theta$ representing the largest interior angle of $\Omega$. Note that $r_1 = 1$ is always a solution of \eqref{eq:regularity-eqs}. Thus, the best possible space for the solutions of  \eqref{eq:gen-steklovlame} in the presence of corners or edges in the domain is $\hh^2(\Omega)$. 
We notice that in the case of Neumann boundary conditions, the regularity of the eigenvectors does not seem to be affected by the Lam\'e parameters (see, e.g. \cite[Theorem 2.1]{ref:nicaise1992}).

\autoref{table:cvsquare} shows the computed rate of convergence of the Steklov-Lam\'e eigenvectors on the unit square $(0,1)^2$. Since the unit square is a convex polygon, we expect that the eigenfunctions belong to $\hh^2(\Omega)$. We see that all computed rates of convergence are around 2. As seeing in \autoref{table:cvcircle}, the rate of convergence possesses a similar behaviour on the unit disk to that on the unit square. We can see that all computed rates of convergence are slightly above 2. We comment that, even though the circle does not have any corners, the reference solution was computed on a computational domain $\Omega_h$ representing a polygon with corners being prescribed on the boundary of the unit circle. Thus, the eigenfunctions on $\Omega_h$ were defined on a convex polygon. This means that the reference eigenfunctions belong to $\hh^2(\Omega_h)$. We also present the convergence of the lowest Steklov-Lam\'e eigenvalues on the L-shape domain $(-1,1)^2\backslash[0,1)^2$. The singularities of the eigenfunctions on the L-shape domain are ruled by the largest interior angle. Solving the nonlinear equation in \eqref{eq:regularity-eqs} with $\theta = \frac{3}{2}\pi$ we obtain $r_1 = 0.5445$. We can see from \autoref{table:cvlshaped} that all computed rates of convergence are above the lowest theoretical convergence rate of $2r_1 = 1.0890$. The last numerical example concerns the convergence of the first 7 non-zero Steklov-Lam\'e eigenvalues on the unit cube $(0,1)^3$. We see in \autoref{table:cvcube} that all computed rated of convergence are around 2. This is expected as the cube is a convex polyhedral domain.


\graphicspath{{./images/}}
\begin{figure}[ht!]
\centering\includegraphics[width = 0.5\textwidth, 
height=0.25\textheight]{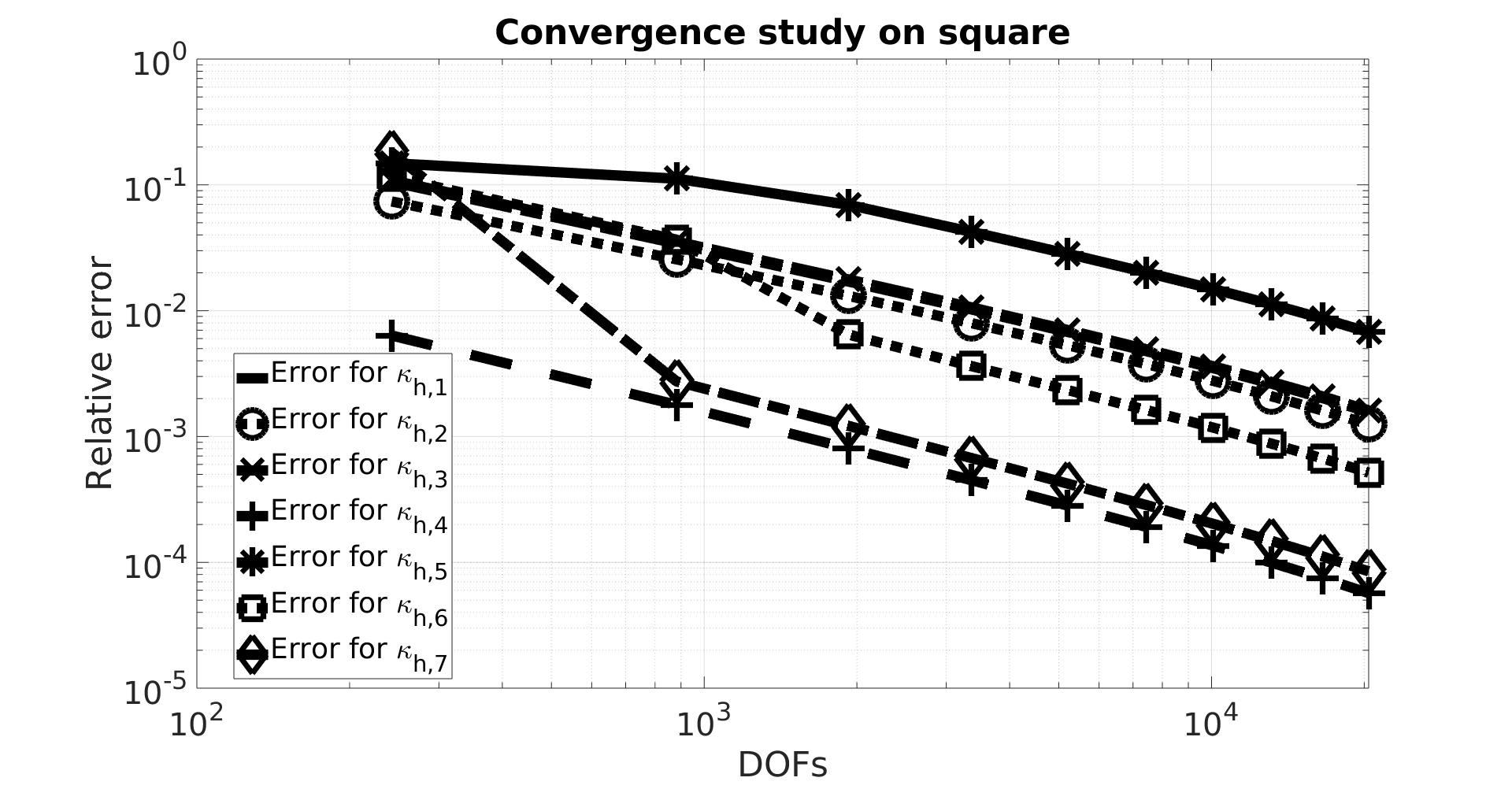}\includegraphics[width = 0.5\textwidth, 
height=0.25\textheight]{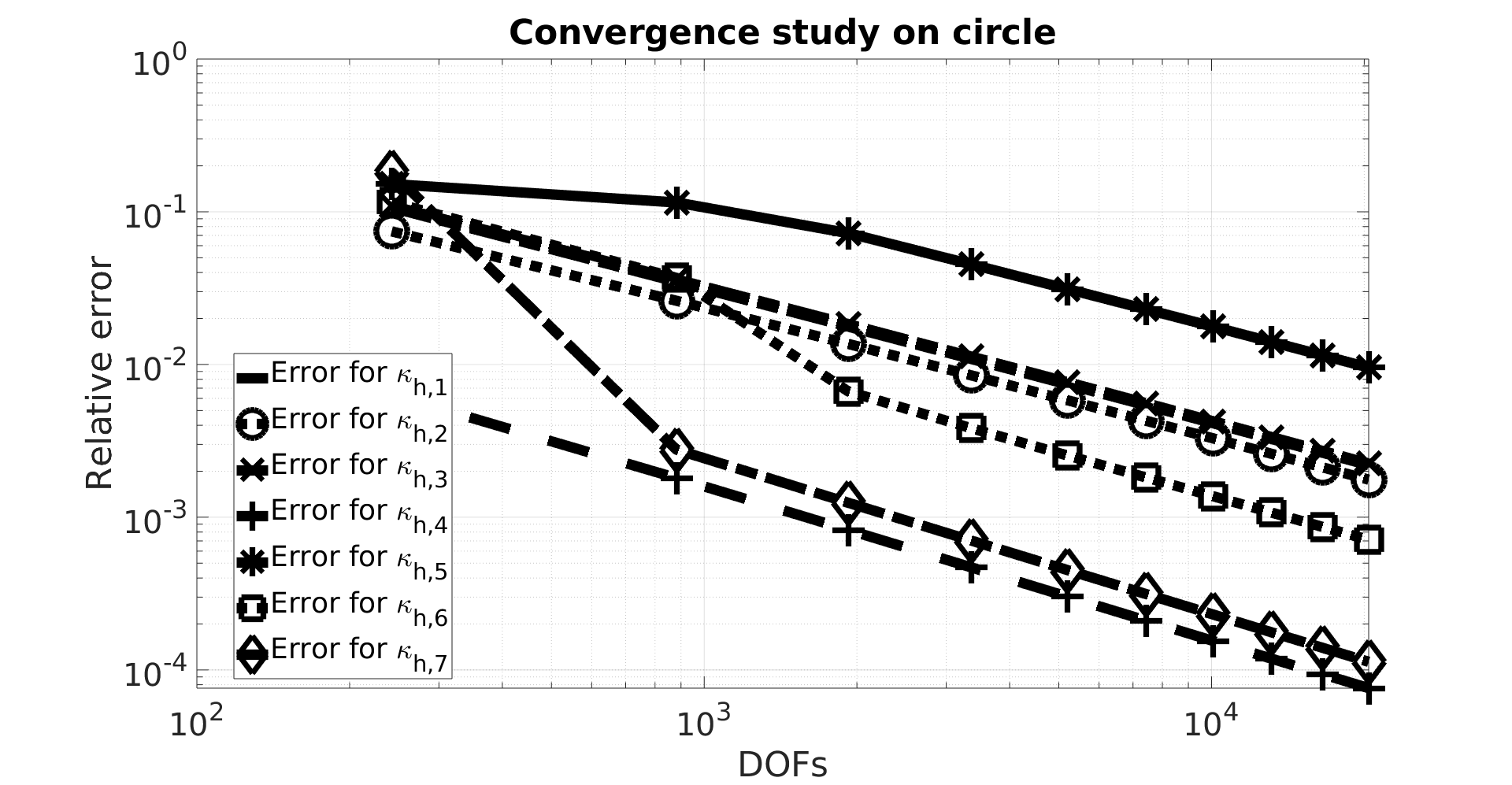}
\centering\includegraphics[width = 0.5\textwidth, 
height=0.25\textheight]{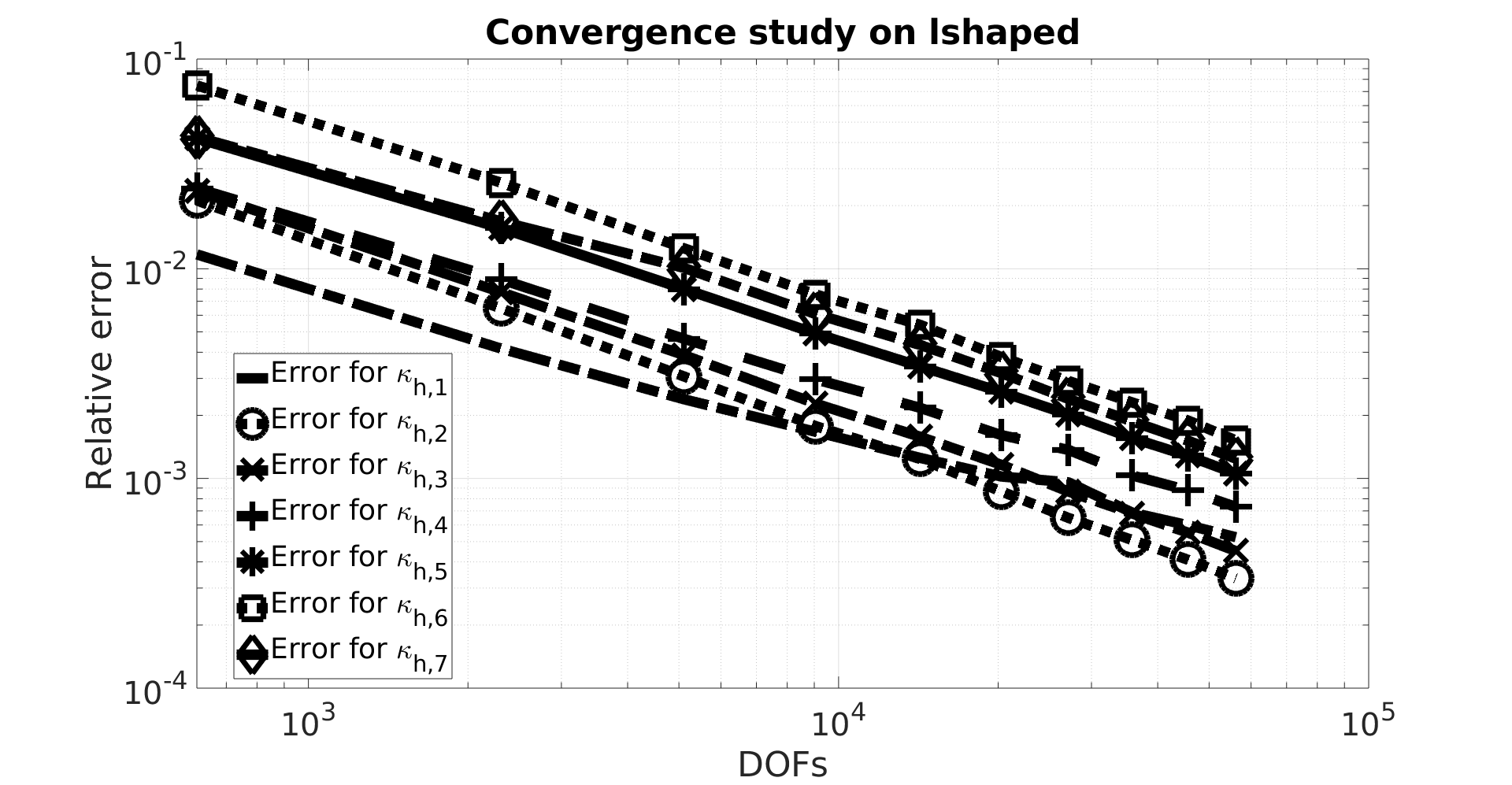}\includegraphics[width = 0.5\textwidth, 
height=0.25\textheight]{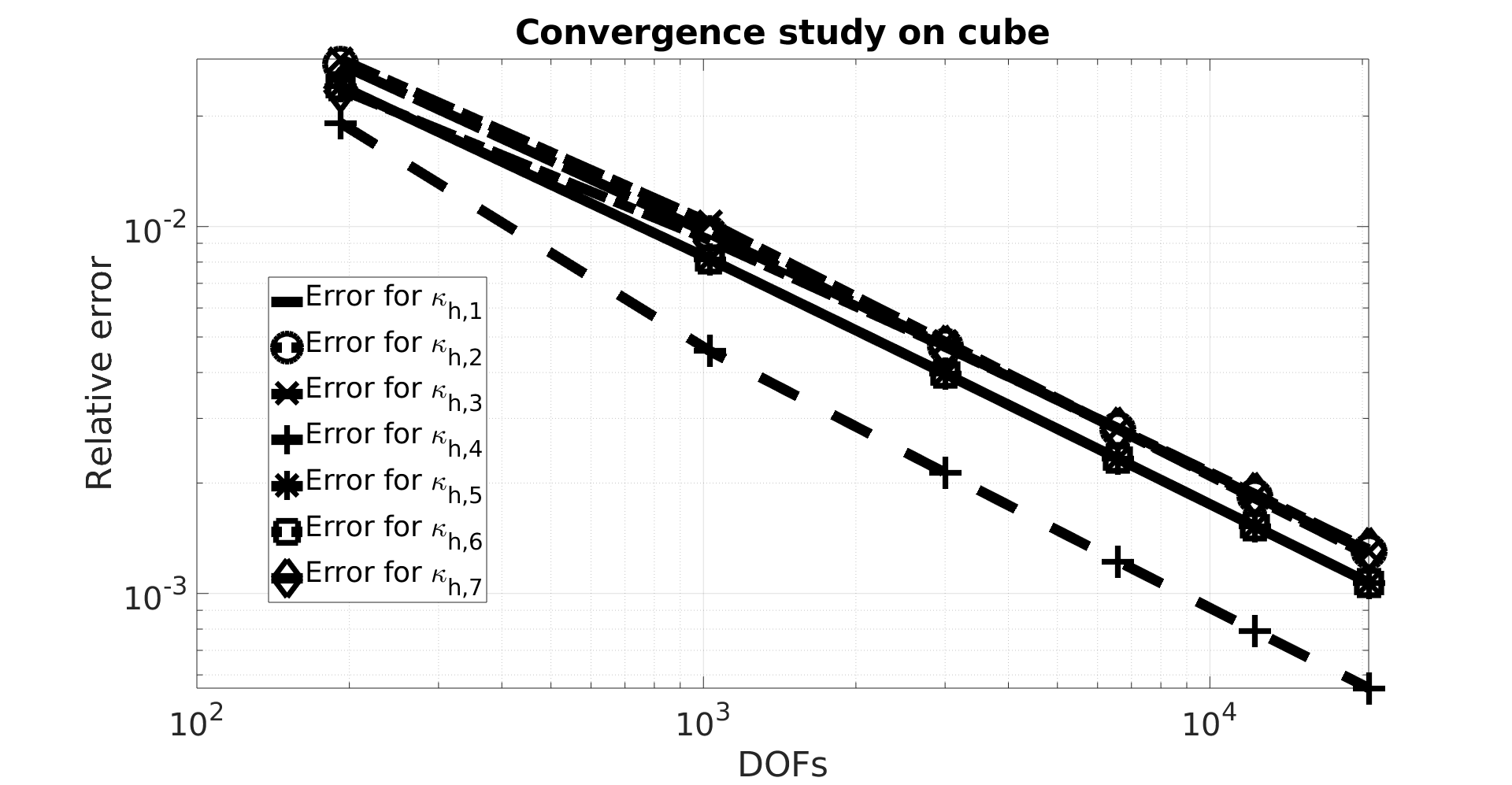}
\caption{Convergence study for the first 6 non-zero eigenvalues on the unit square (top-left), the unit disk (top-right), the L-shape (bottom-left) and the unit cube (bottom-right). Recall that the number of degrees of freedom $N$ and the meshsize $h$ scale as $h = \oooo(N^{-1/d})$.
}\label{fig:cvstudy}
\end{figure}

The convergence history of the first 7 computed Steklov-Lam\'e eigenvalues is shown in \autoref{fig:cvstudy} for the unit square, unit disk, L-shape domain, and unit cube. These correspond to the same computed eigenvalues to those shown in \autoref{table:cvsquare}, \autoref{table:cvcircle}, \autoref{table:cvlshaped}, and \autoref{table:cvcube}.

\begin{table}[ht!]
    \centering
    {\footnotesize{\begin{tabular}{c|c|c|c|c|c|c|c}
       EVs &$N = $ 242&$N = $ 1922&$N = $ 5202&$N = $ 10082&$N = $ 16562& Reference& Rate\\\hline
$\kappa_{h,1}$ &2.800192&2.57581&2.549729&2.541415&2.537678& 2.532570 & 2.0419\\
$\kappa_{h,2}$ &2.872823&2.710273&2.689398&2.682579&2.679477& 2.675175 &    2.0136\\
$\kappa_{h,3}$ &2.966591&2.722965&2.69431&2.685177&2.681081& 2.675513 &     2.0499\\
$\kappa_{h,4}$ &3.734775&3.714195&3.712252&3.711705&3.711479& 3.711202 &    2.2488\\
$\kappa_{h,5}$ &5.480897&5.103026&4.906772&4.842315&4.81281& 4.771482 &     1.9973\\
$\kappa_{h,6}$ &5.860259&5.288715&5.266997&5.260878&5.258216& 5.254700 &    2.1177\\
$\kappa_{h,7}$ &6.84993&5.806006&5.801406&5.800129&5.799602& 5.79879 & 
2.2558
    \end{tabular}}}
    \caption{First 7 (counted with their multiplicities) computed nonzero Steklov-Lam\'e eigenvalues on the unit square with Lam\'e parameters $\mu = \lambda = p = 1$. Recall that $\kappa_h = w_h +1$.}
    \label{table:cvsquare}
\end{table}

\begin{table}[ht!]
    \centering
    {\footnotesize{\begin{tabular}{c|c|c|c|c|c|c|c}
      EVs &$N = $ 190&$N = $ 1520&$N = $ 4046&$N =  $ 7794&$N =  $ 12956& Reference & Rate\\\hline
      $\kappa_{h,1}$ & 3.003639&3.000406&3.000146&3.000075&3.000045& 3.000009 & 2.1645\\
      $\kappa_{h,2}$ & 
3.003639&3.000406&3.000146&3.000075&3.000045& 3.000009 & 2.0699\\
$\kappa_{h,3}$ &
3.059373&3.006645&3.002431&3.001284&3.000756 & 3.000009 & 2.1528\\
$\kappa_{h,4}$ &
3.063728&3.00676&3.00252&3.001312&3.00077& 3.000009 & 2.0730\\
$\kappa_{h,5}$ &
4.324313&4.036279&4.013165&4.006795&4.004003& 4.000014 & 2.1638\\
$\kappa_{h,6}$ &
4.39301&4.03863&4.014033&4.007199&4.004586& 4.000014 & 2.1349
\\
$\kappa_{h,7}$ &
5.007277&5.000812&5.000292&5.000149&5.00009& 5.000018 & 2.1995
    \end{tabular}}}
    \caption{First 7 (counted with their multiplicities) computed nonzero Steklov-Lam\'e eigenvalues on the unit disk with Lam\'e parameters $\mu = \lambda = p = 1$. Recall that $\kappa_h = w_h +1$.}
    \label{table:cvcircle}
\end{table}

\begin{table}[ht!]
    \centering
    {\footnotesize{\begin{tabular}{c|c|c|c|c|c|c|c}
     EVs &$N = $ 616&$N = $ 5114&$N = $ 14244&$N = $ 27164&$N =  $ 45620& Reference & Rate\\\hline
      $\kappa_{h,1}$ & 1.168833&1.158064&1.156757&1.156416&1.156000 & 1.155308 & 1.5808\\
$\kappa_{h,2}$ &1.750674&1.719661&1.716536&1.715522&1.715113 & 1.714410 & 1.6349\\ 
$\kappa_{h,3}$ &2.061&2.021514&2.016901&2.015461&2.014828  & 2.01371 & 1.6895\\
$\kappa_{h,4}$ &2.177396&2.135806&2.130581&2.128869&2.127828 & 2.125962 & 1.4848\\
$\kappa_{h,5}$ &2.724265&2.635748&2.623772&2.620085&2.618166 & 2.614815 & 1.7108
\\
$\kappa_{h,6}$ &2.94148&2.770783&2.751355&2.744538&2.741737 & 2.736563 & 1.5737
\\
$\kappa_{h,7}$ &3.513536&3.404443&3.385&3.378496&3.375532 &3.370429 & 1.8972
    \end{tabular}}}
    \caption{First 7 (counted with their multiplicities) computed nonzero Steklov-Lam\'e eigenvalues on the L-shape with Lam\'e parameters $\mu = \lambda = p = 1$. Recall that $\kappa_h = w_h +1$.}
    \label{table:cvlshaped}
\end{table}

\begin{table}[ht!]
    \centering
    {\footnotesize{\begin{tabular}{c|c|c|c|c|c|c|c}
     EVs &$N=$ 1029&$N=$ 3000&$N=$ 6591&$N=$ 12288&$N=$ 20577& Reference & Rate\\\hline
$\kappa_{h,1}$ & 2.082904&2.072949&2.068958&2.066928&2.065802 & 2.06318 & 2.0820 \\
$\kappa_{h,2}$ & 2.082904&2.072949&2.068958&2.066979&2.065861 & 2.063182 & 2.0238 \\ 
$\kappa_{h,3}$ & 2.084313&2.073188&2.068959&2.066979&2.065861 & 2.063182 & 2.0242 \\
$\kappa_{h,4}$ & 2.099347&2.094227&2.092327&2.091424&2.090926 & 2.089772 & 2.0952 \\
$\kappa_{h,5}$ & 2.106801&2.098076&2.094638&2.092955&2.092011 & 2.089774 & 2.0469 \\
$\kappa_{h,6}$ & 2.106801&2.098076&2.094638&2.092955&2.092011 & 2.089774 & 2.0469 \\
$\kappa_{h,7}$ & 2.119688&2.110279&2.106316&2.104312&2.103168 & 2.100399 & 2.0007
    \end{tabular}}}
    \caption{First 7 (counted with their multiplicities) computed nonzero Steklov-Lam\'e eigenvalues on the unit cube with Lam\'e parameters $\mu = \lambda = p = 1$. Recall that $\kappa_h = w_h +1$.}
    \label{table:cvcube}
\end{table}

\graphicspath{{./images/SteklovLame/square/}}
\begin{figure}[!ht]
\centering\includegraphics[width = .3\textwidth, 
height = .125\textheight]{EW3-xcomp.png}\includegraphics[width = .3\textwidth, 
height = .125\textheight]{EW3-ycomp.png}
\centering\includegraphics[width = .3\textwidth, 
height = .125\textheight]{EW4-xcomp.png}\includegraphics[width = .3\textwidth, 
height = .125\textheight]{EW4-ycomp.png}
\centering\includegraphics[width = .3\textwidth, 
height = .125\textheight]{EW5-xcomp.png}\includegraphics[width = .3\textwidth, 
height = .125\textheight]{EW5-ycomp.png}
\centering\includegraphics[width = .3\textwidth, 
height = .125\textheight]{EW6-xcomp.png}\includegraphics[width = .3\textwidth, 
height = .125\textheight]{EW6-ycomp.png}
\centering\includegraphics[width = .3\textwidth, 
height = .125\textheight]{EW7-xcomp.png}\includegraphics[width = .3\textwidth, 
height = .125\textheight]{EW7-ycomp.png}
\centering\includegraphics[width = .3\textwidth, 
height = .125\textheight]{EW8-xcomp.png}\includegraphics[width = .3\textwidth, 
height = .125\textheight]{EW8-ycomp.png}
\centering\includegraphics[width = .3\textwidth, 
height = .125\textheight]{EW9-xcomp.png}\includegraphics[width = .3\textwidth, 
height = .125\textheight]{EW9-ycomp.png}
\caption{First 7 eigenfunctions $\u_h$ on the unit square associated to the first 7 non-zero eigenvalues $\kappa_h$ (counted with their multiplicities). The x-component of $\u_h$ is on the left column while the y-component of $\u_h$ is shown on the left column.}\label{fig:square}
\end{figure}



\graphicspath{{./images/SteklovLame/circle/}}
\begin{figure}[!ht]
\centering\includegraphics[width = .3\textwidth, 
height = .125\textheight]{EW3-xcomp.png}\includegraphics[width = .3\textwidth, 
height = .125\textheight]{EW3-ycomp.png}
\centering\includegraphics[width = .3\textwidth, 
height = .125\textheight]{EW4-xcomp.png}\includegraphics[width = .3\textwidth, 
height = .125\textheight]{EW4-ycomp.png}
\centering\includegraphics[width = .3\textwidth, 
height = .125\textheight]{EW5-xcomp.png}\includegraphics[width = .3\textwidth, 
height = .125\textheight]{EW5-ycomp.png}
\centering\includegraphics[width = .3\textwidth, 
height = .125\textheight]{EW6-xcomp.png}\includegraphics[width = .3\textwidth, 
height = .125\textheight]{EW6-ycomp.png}
\centering\includegraphics[width = .3\textwidth, 
height = .125\textheight]{EW7-xcomp.png}\includegraphics[width = .3\textwidth, 
height = .125\textheight]{EW7-ycomp.png}
\centering\includegraphics[width = .3\textwidth, 
height = .125\textheight]{EW8-xcomp.png}\includegraphics[width = .3\textwidth, 
height = .125\textheight]{EW8-ycomp.png}
\centering\includegraphics[width = .3\textwidth, 
height = .125\textheight]{EW9-xcomp.png}\includegraphics[width = .3\textwidth, 
height = .125\textheight]{EW9-ycomp.png}
\caption{First 7 eigenfunctions $\u_h$ on the unit disk associated to the first 7 non-zero eigenvalues $\kappa_h$ (counted with their multiplicities). The x-component of $\u_h$ is on the left column while the y-component of $\u_h$ is shown on the left column.}\label{fig:circle}
\end{figure}

\graphicspath{{./images/SteklovLame/lshaped/}}
\begin{figure}[!ht]
\centering\includegraphics[width = .3\textwidth, 
height = .125\textheight]{EW3-xcomp.png}\includegraphics[width = .3\textwidth, 
height = .125\textheight]{EW3-ycomp.png}
\centering\includegraphics[width = .3\textwidth, 
height = .125\textheight]{EW4-xcomp.png}\includegraphics[width = .3\textwidth, 
height = .125\textheight]{EW4-ycomp.png}
\centering\includegraphics[width = .3\textwidth, 
height = .125\textheight]{EW5-xcomp.png}\includegraphics[width = .3\textwidth, 
height = .125\textheight]{EW5-ycomp.png}
\centering\includegraphics[width = .3\textwidth, 
height = .125\textheight]{EW6-xcomp.png}\includegraphics[width = .3\textwidth, 
height = .125\textheight]{EW6-ycomp.png}
\centering\includegraphics[width = .3\textwidth, 
height = .125\textheight]{EW7-xcomp.png}\includegraphics[width = .3\textwidth, 
height = .125\textheight]{EW7-ycomp.png}
\centering\includegraphics[width = .3\textwidth, 
height = .125\textheight]{EW8-xcomp.png}\includegraphics[width = .3\textwidth, 
height = .125\textheight]{EW8-ycomp.png}
\centering\includegraphics[width = .3\textwidth, 
height = .125\textheight]{EW9-xcomp.png}\includegraphics[width = .3\textwidth, 
height = .125\textheight]{EW9-ycomp.png}
\caption{First 7 eigenfunctions $\u_h$ on the L-shaped $(-1,1)^2\backslash[0,1)^2$ associated to the first 7 non-zero eigenvalues $\kappa_h$ (counted with their multiplicities). The x-component of $\u_h$ is on the left column while the y-component of $\u_h$ is shown on the left column.}\label{fig:lshaped}
\end{figure}

\section{Conclusions}
In this paper we presented a study of Steklov eigenvalues for the Lam\'e operator in linear elasticity. We established the existence of a countable spectrum for the Steklov-Lam\'e eigenproblem. We also proved a different version of Korn's inequality (cf. \eqref{eq:extendedkorns}). This inequality was used to obtain the unique solvability of the source problem used to define the solution operator (cf. \eqref{eq:soloperator}). A spectral characterization was given, showing that the kernel of the solution operator is $\hh^1_0(\Omega)$. The compactness of a modified solution operator was achieved by using the continuity and compactness of the trace operator.

In addition, a conforming discrete formulation was proposed in \ref{section:discrete}. Based on the theory developed in \cite{ref:babuskaosborn1991} we were able to show that the discrete formulation in \eqref{eq:discrete-form} provides the correct approximation to the true Steklov-Lam\'e eigenpairs. Finally, we provided numerical results showing the convergence properties of the discrete scheme. We showed that the rate of convergence are close to those predicted by the regularity of the eigenfunctions.

Many questions regarding the Steklov-Lam\'e spectrum are still to be answered. For example, it is known that primal formulations are not suitable to deal with situations in which $\lambda\to+\infty$ (for incompressible materials). In fact, we can see that the bilinear form $a$ becomes unbounded in such cases. To remedy this issue, one can consider a mixed formulation of \eqref{eq:steklovlame}, following the ideas given in, for instance \cite{ref:meddahi2013}. 

On the other hand, no true solutions of the Steklov-Lam\'e eigenproblem are yet known. The numerical examples on the unit disk presented in \ref{section:numerics} show that for $\mu = \lambda = p = 1$, the eigenvalues of \eqref{eq:steklov-lame} are $w_0 = 0$ with multiplicity 3, $w_{2k-1} = 2k$ with multiplicity 4, and $w_{2k} = 2k+1$ with multiplicity 2, for all $k\in\nnn$. A similar behaviour is exhibited by the Steklov eigenvalues of the Laplacian where all positive eigenvalues have multiplicity 2 (see, e.g. \cite{ref:girouard2017}).


\section*{Acknowledgements} Sebasti\'an Dom\'inguez thanks the financial support of the Pacific Institute for the Mathematical Sciences. The author acknowledges Nilima Nigam (Simon Fraser University) and Richard Laugesen (University of Illinois Urbana-Champaign). Many ideas presented in this manuscript were discussed in length with them.

%
%
%
%
%
%
%
%
%
%
%

\bibliographystyle{plain}
\bibliography{ThesisReferences.bib}

\end{document}